
\documentclass[article,12pt]{amsart}

\usepackage{amsfonts}
\usepackage{amsmath}
\usepackage{mathrsfs}
\usepackage{graphicx}
\usepackage{color}
\usepackage{epsfig}
\usepackage{subcaption}
\usepackage{enumerate}
\usepackage{comment}

 \usepackage[foot]{amsaddr}

\setcounter{MaxMatrixCols}{10}

\setlength{\oddsidemargin}{0.1cm}   
\setlength{\evensidemargin}{-0.1cm}  
\setlength{\topmargin}{-2.2cm}  
\setlength{\textwidth}{15.cm} 
\setlength{\textheight}{22.cm}
 
\numberwithin{equation}{section}
\theoremstyle{plain}
\newtheorem{thm}{Theorem}[section]
\newtheorem{rem}{Remark}[section]
\newtheorem{prop}{Proposition}[section]
\newtheorem{cor}{Corollary}[section]
\newtheorem{lem}{Lemma}[section]

\newcommand{\dE}{\mathbb{E}}
\newcommand{\dR}{\mathbb{R}}

\newcommand{\dP}{\mathbb{P}}

\newcommand{\dN}{\mathbb{N}}
\newcommand{\dZ}{\mathbb{Z}}

\newcommand{\cB}{\mathcal{B}}
\newcommand{\cC}{\mathcal{C}}

\newcommand{\cL}{\mathcal{L}}

\newcommand{\cN}{\mathcal{N}}

\newcommand{\cS}{\mathcal{S}}
\newcommand{\rI}{\mathrm{I}}
\newcommand{\cF}{\mathcal{F}}

\newcommand{\ind}{\mbox{1}\kern-.25em \mbox{I}}

\def\build#1_#2^#3{\mathrel{\mathop{\kern 0pt#1}\limits_{#2}^{#3}}}

\def\videbox{\mathbin{\vbox{\hrule\hbox{\vrule height1.4ex \kern.6em\vrule height1.4ex}\hrule}}}
\def\demend{\hfill $\videbox$\\}


\newcommand{\Hm}[1]{\leavevmode{\marginpar{\tiny%
$\hbox to 0mm{\hspace*{-0.5mm}$\leftarrow$\hss}%
\vcenter{\vrule depth 0.1mm height 0.1mm width \the\marginparwidth}%
\hbox to 0mm{\hss$\rightarrow$\hspace*{-0.5mm}}$\\\relax\raggedright
#1}}}
\usepackage{color}


\begin{document}
\title[Descents and inverse descents in a random permutation]
{Sharp analysis on the joint distribution of the number of descents and inverse descents in a random permutation}
\author{Bernard Bercu}
\thanks{The corresponding author is Michel Bonnefont,  michel.bonnefont@math.u-bordeaux.fr}
\address{Universit\'e de Bordeaux, Institut de Math\'ematiques de Bordeaux,
UMR CNRS 5251, 351 Cours de la Lib\'eration, 33405 Talence cedex, France.}
\email{bernard.bercu@math.u-bordeaux.fr}
\author{Michel Bonnefont}
\email{michel.bonnefont@math.u-bordeaux.fr}
\author{Luis Fredes}
\email{luis.fredes@math.u-bordeaux.fr}
\author{Adrien Richou}
\email{adrien.richou@math.u-bordeaux.fr}
\date{\today}

\begin{abstract}
Chatteerjee and Diaconis have recently shown the asymptotic normality for the joint distribution of the number of descents and inverse descents in a random permutation. A noteworthy point of their results is that the asymptotic variance of the normal distribution is diagonal, which means that the number of descents and inverse descents are asymptotically uncorrelated.
The goal of this paper is to go further in this analysis by proving a large deviation principle
for the joint distribution. We shall show that the rate function of the joint distribution
is the sum of the rate functions of the marginal distributions, which also means that the number of descents and inverse descents are asymptotically independent at the large deviation level. However,
we are going to prove that they are finely dependent at the sharp large deviation level.
\end{abstract}

 \keywords{Large deviations, random permutations}
 \subjclass{60F10, 05A05}

\maketitle

\vspace{1ex}


\section{Introduction}
\label{S-I}


Let $\cS_n$ be the symmetric group of permutations on the set of integers $\{1,\ldots,n\}$ 
where $n \geq 1$. A permutation $\pi_n \in \cS_n$ has a descent at position $k \in \{1,\ldots,n-1\}$ if  $\pi_n(k)>\pi_n(k+1)$. 
Let $D_n=D_n(\pi_n)$ be the random variable counting the number of descents 
of a permutation $\pi_n$ chosen uniformly at random from $\cS_n$ and denote by 
$D^\prime_n=D_n(\pi_n^{-1})$ the number of descents of the inverse
$\pi_n^{-1}$ of $\pi_n$. We clearly have for all $n \geq 2$,
\begin{equation}
\label{DEFDN}
D_{n}=\sum_{k=1}^{n-1} \rI_{\{\pi_n(k)>\pi_n(k+1)\}}
\hspace{1cm}\text{and}\hspace{1cm}
D^\prime_{n}=\sum_{k=1}^{n-1} \rI_{\{\pi_n^{-1}(k)>\pi_n^{-1}(k+1)\}}.
\end{equation}
A wide range of literature is available on the asymptotic behavior of the marginal distribution 
$(D_n)$. It is well-known
\cite{Freedman1965}, \cite{Garet2021}, \cite{Ozdemir2021} that
\begin{equation}
\label{ASCVGDN}
\lim_{n \rightarrow \infty}\frac{D_n}{n}=\frac{1}{2} \hspace{1cm} \text{a.s.}
\end{equation}
and
\begin{equation}
\label{ANDN}
\sqrt{n}\left(\frac{D_n}{n} - \frac{1}{2}\right)
\underset{n\rightarrow+\infty}{\overset{\cL}{\longrightarrow}}
 \cN \Bigl(0, \frac{1}{12}\Bigr).
\end{equation}
Moreover, Bryc, Minda and Sethuraman \cite{Bryc2009} have shown that
$(D_n/n)$ satisfies a large deviation principle (LDP) with good rate 
function given, for all $x \in \dR$, by
\begin{equation}
\label{DEFI}
I(x)=\sup_{ t \in \dR} \bigl\{xt - L(t) \bigr\}
\end{equation}
where 
\begin{equation}\label{DEFL}
L(t)=\log \left(\frac{\exp(t)-1}{t}\right).
\end{equation}
Surprisingly, to the best of our knowledge, very few result are available on the 
asymptotic behavior of the joint distribution $(D_n,D^\prime_n)$, except
the recent contribution of Chatteerjee and Diaconis \cite{Chatterjee2017}.
By using a normal approximation method due to Chatterjee
\cite{Chatterjee2008}, it is shown in \cite{Chatterjee2017} that
\begin{equation}
\label{CANDN}
\sqrt{n}
\left(
\frac{D_n}{n} - \frac{1}{2},
\frac{D_n^\prime}{n} - \frac{1}{2}
\right)
\underset{n\rightarrow+\infty}{\overset{\cL}{\longrightarrow}}
\cN \bigl(0, \Gamma\bigr),
\end{equation}
where $\Gamma$ stands for the diagonal matrix
$$
\Gamma=\frac{1}{12} 
\begin{pmatrix}
1 & 0\\
0 & 1
\end{pmatrix}.
$$
It clearly means that $D_n$ and $D^\prime_n$ are asymptotically uncorrelated. 
One can observe that the almost sure convergence of the couple $(D_n/n, D'_n/n)$ to $(1/2,1/2)$ can be obtained from Remark 3.2 in \cite{Bercu2024}.
As suggested in Section 3 of \cite{Chatterjee2017}, it is possible to go deeper into the analysis of the joint distribution $(D_n,D^\prime_n)$ by proving that the couple $(D_n/n,D^\prime_n/n)$ satisfies
an LDP with good rate function $I$ defined, for all $x,y \in \dR$, by
\begin{equation}
\label{DEFCI}
I(x,y)=I(x)+I(y).
\end{equation}
It also implies that, at the large deviation level, $D_n$ and $D^\prime_n$ are asymptotically independent. Moreever, it was recently proven in \cite{Bercu2024} that the sequence $(D_n)$ satisfies a sharp large deviation principle (SLDP). We are going to improve this result by showing that the couple $(D_n,D^\prime_n)$ satisfies a SLDP which will highlight the very
fine dependence between $D_n$ and $D^\prime_n$ at the sharp large deviation level.
\ \vspace{1ex} \\
The paper is organized as follows. Section \ref{S-PS} deals with an explicit construction of the probability space on which we will be working.
Section \ref{S-MG} is devoted to our martingale approach which allows us to find again a direct proof of the asymptotic normality \eqref{CANDN} and to propose new standard results for the
joint distribution $(D_n,D^\prime_n)$ such as a functional central limit theorem.
The main results of the paper are given in Section \ref{S-MR} where we establish the large
deviation properties of the couple $(D_n,D^\prime_n)$. 
A short conclusion is given in Section \ref{S-C}. All technical proofs are postponed to Sections \ref{S-PLDP} to \ref{S-PMG}.


\section{The probability space}
\label{S-PS}
Before investigating the joint distribution of $(D_n,D^\prime_n)$, it is necessary to provide an explicit construction of the probability space on which we will be working. For that purpose,
we are going to study how a permutation $\pi_{n+1}$ of size $n+1$ adds descents when it is created from a permutation $\pi_n$ of size $n$. We will do this by means of a graphical representation as explained below. 
We start by presenting a permutation $\pi_n$ as the set of all points $GR(\pi_n)$ 
inside $[0,1]^2$ 
where 
$$\pi_n(i)=j\implies \Bigl(\frac{i}{n+1},\frac{j}{n+1}\Bigr) \in GR(\pi_n).$$
One can observe that the image of this transformation has the property of having exactly one point in each line and column of type $i/(n+1)$ for $i\in \{1,\dots,n\}$ and to send all the points inside $[0,1]^2$. We call $\mathcal{GR}_n$ the set of all points with this property. 
It is easy to see that the sets $\cS_n$ and $\mathcal{GR}_n$ are in bijection 
since the points in each element of $\mathcal{GR}_n$ defines the map of a bijective function.
We define the cell $\cC_{n}(i,j)$ to be the region inside $[0,1]^2$ defined, for all
$(i,j)\in \{1,\dots,n+1\}^2$, by 
$$
\cC_{n}(i,j)=\Bigl]\frac{i-1}{n+1},\frac{i}{n+1}\Bigr[\times \Bigl]\frac{j-1}{n+1},\frac{j}{n+1}\Bigr[.
$$ 

For a point $u \in [0,1]^2$, we construct $\pi_{n+1}$ as a function of $u$ and $\pi_n$ as described in Figure \ref{fig:graph_rep_0} below,
$$
	\pi_{n+1}(i) = \begin{cases}
		\pi_{n}(i)&\text{ if } u \in \cC_{n}(k,\ell),\; i < k \text{ and } \; \pi_n(i)< \ell \\
		\pi_{n}(i)+1&\text{ if } u  \in \cC_{n}(k,\ell), \; i < k \text{ and } \; \pi_n(i)\geq  \ell \\
		\pi_{n}(i-1)&\text{ if } u \in \cC_{n}(k,\ell), \; i>  k \text{ and } \; \pi_n(i)< \ell \\
		\pi_{n}(i-1)+1&\text{ if } u\in \cC_{n}(k,\ell), \; i>  k \text{ and } \; \pi_n(i)\geq \ell\\ 
		\ell &\text{ if } u \in \cC_{n}(k,\ell), \; i=  k
	\end{cases}
$$
\begin{figure}[h!]
	\centering
	\includegraphics[scale=0.6]{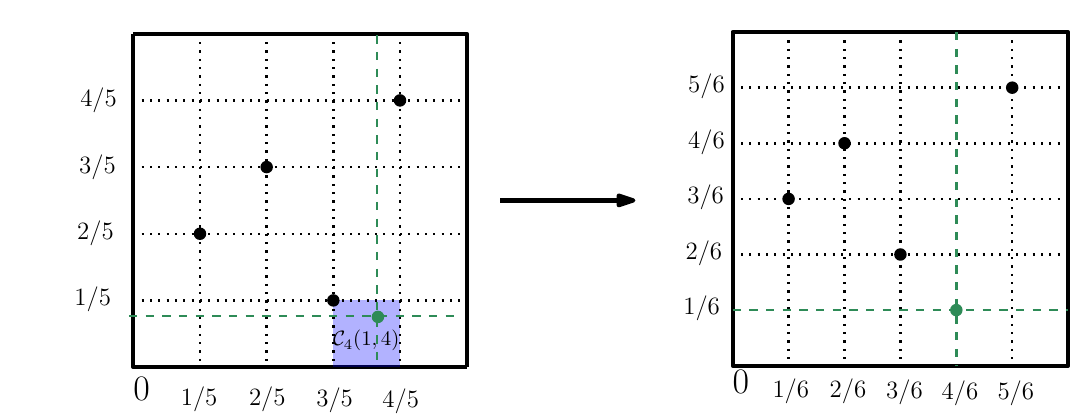}
	\caption{Graphical representation of $\pi_4 = (2314)$ to the left. We colored in light blue the cell $\cC_4(1,4)$. We represent to the right-side the resulting permutation $\pi_{5}= (34215)$ when $u\in \cC_4(1,4)$. We present the point $u$ and vertical and horizontal lines crossing it in green in order to visually see the transformation.}
	\label{fig:graph_rep_0}
\end{figure}

\noindent
It is clear that our construction of $\pi_{n+1}$ only depends on $\pi_n$ and on the choice of the indexes $(k,\ell)$ of the cell where the point $u$ lands. One can think of this transformation as the result of adding, in the graphic representation of $\pi_n$, new horizontal and vertical lines crossing the point $u$ and after rescaling these lines in order to keep the relative order of points with spacing $1/(n+2)$, we obtain the graphical representation of $\pi_{n+1}$. 
\ \vspace{1ex} \\
Up to now, $\pi_{n+1}$ is built in a deterministic way. In what follows, what will play the role of $u$ is a random variable $U_{n+1}$ uniformly chosen in $[0,1]^2$ and independent of the permutation $\pi_n$. Since the boundary lines of cells have Lebesgue measure 0, we suppose without loss of generality that $U_{n+1}$ does not belong to the boundary of any cell. It is simple to prove that if $\pi_{n}$ is chosen uniformly at random from $\cS_n$, then $\pi_{n+1}$ is a uniform permutation over $\cS_{n+1}$. From this construction, we can built the sequence $(D_n,D^\prime_n)$ on a unique probability space only depending on the whole sequence $(U_n)$.
\ \vspace{1ex} \\
First of all, we start at the origin $(D_1,D^\prime_1)=(0,0)$. Then, for all $n \geq 1$, we denote by
$\Delta D_{n+1}$ and $\Delta D^\prime_{n+1}$ the increments $\Delta D_{n+1}=D_{n+1}-D_n$
and $\Delta D^\prime_{n+1}= D^\prime_{n+1} - D^\prime_{n}$.
Thanks to our representation, we can study the number of cells $\cC_{n}(k,\ell)$ that generate prescribed increments $\Delta D_{n+1}=a$ and $\Delta D'_{n+1}=b$, $a,b\in \{0,1\}$ for the permutation $\pi_n$ and its inverse $\pi_n^{-1}$ when $u\in \cC_{n}(k,\ell)$.
For $a,b\in \{0,1\}$, let
\begin{equation*}
    N_{n+1}(a,b, \pi_n)= \text{Card} \big\{(k,\ell) \in \{1, \dots,n+1\}^2 \big\slash
    \Delta D_{n+1}= a, \Delta D'_{n+1}= b, u\in \cC_{n}(k,\ell)\big\}.
\end{equation*}

\noindent
Our first result shows that $N_{n+1}(a,b, \pi_n)$ depends only on $D_n$ and $D^\prime_n$ as follows.

\begin{thm}
\label{T-PROBASPACE}
For every permutation $\pi_{n+1}$ chosen uniformly at random from $\cS_{n+1}$, we have
\begin{equation*}
   N_{n+1} (a,b,\pi_n) =\left \{ \begin{array}{ccc}
     (n-D_n)(n-D^\prime_n)+n & \text{ if } & (a,b)=(1,1), \vspace{1ex}\\
     (n-D_n)(D'_n+1)-n & \text{ if }  & (a,b)=(1,0), \vspace{1ex}\\
      (D_n+1)(n-D'_n)-n  & \text{ if } & (a,b)=(0,1), \vspace{1ex}\\
     (D_n+1)(D'_n+1)+n & \text{ if }  & (a,b)=(0,0).
   \end{array} \nonumber \right.
\end{equation*}
\end{thm}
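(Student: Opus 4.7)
The strategy is to derive an explicit closed form for the increments $\Delta D_{n+1}$ and $\Delta D'_{n+1}$ as functions of the cell indices $(k,\ell)$, and then to compute $N_{n+1}(1,1,\pi_n)$ by expanding and evaluating the double sum $\sum_{(k,\ell)} \Delta D_{n+1}\cdot\Delta D'_{n+1}$; the other three counts will then follow from marginal identities.

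With the boundary conventions $\pi_n(0)=0$ and $\pi_n(n+1)=n+1$, a case analysis on how the insertion at $\cC_n(k,\ell)$ modifies the descent pattern at positions $k-1,k,k+1$ of $\pi_{n+1}$ yields the identity
\[
\Delta D_{n+1} = \mathbb{1}_{\pi_n(k-1)\geq\ell} + \mathbb{1}_{\pi_n(k)<\ell} - \mathbb{1}_{\pi_n(k-1)>\pi_n(k)},
\]
valid for every cell $\cC_n(k,\ell)$. Since the graphical construction of Section~\ref{S-PS} is symmetric under transposing position and value (inserting $(k,\ell)$ into $\pi_n$ amounts to inserting $(\ell,k)$ into $\pi_n^{-1}$), the analogous identity holds for $\Delta D'_{n+1}$ with $\pi_n$ replaced by $\pi_n^{-1}$ and $k,\ell$ swapped.

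Because both increments take values in $\{0,1\}$, one has $N_{n+1}(1,1,\pi_n) = \sum_{(k,\ell)} \Delta D_{n+1}\cdot\Delta D'_{n+1}$. Expanding the product of the two identities yields nine indicator sub-sums, which I would evaluate combinatorially. The four sub-sums containing no descent indicator, through the substitution $p=\pi_n^{-1}(\cdot)$, translate into counts of pairs $(i,p)\in\{1,\ldots,n\}^2$ subject to simple order relations between $i,p$ and $\pi_n(i),\pi_n(p)$; two of them evaluate to the inversion count $\mathrm{Inv}(\pi_n)$ and two to $n+\binom{n}{2}-\mathrm{Inv}(\pi_n)$, the extra $n$ arising from the diagonal contribution $p=i$. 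The four sub-sums containing exactly one descent indicator reshuffle into weighted sums over the descent positions of $\pi_n$ or $\pi_n^{-1}$, and within each pair these telescope to $-nD_n$ or $-nD'_n$. The last sub-sum factorizes as $D_nD'_n$. Adding the nine contributions causes the inversion counts to cancel, producing
\[
N_{n+1}(1,1,\pi_n) = 2n + 2\binom{n}{2} - n(D_n+D'_n) + D_nD'_n = (n-D_n)(n-D'_n) + n.
\]

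The remaining three identities follow from the marginal counts $\sum_{(k,\ell)}\Delta D_{n+1}=(n+1)(n-D_n)$ and $\sum_{(k,\ell)}\Delta D'_{n+1}=(n+1)(n-D'_n)$, themselves obtained by summing the formula of the first step over $(k,\ell)$, together with the total cardinality $(n+1)^2$, by straightforward subtraction. The main obstacle is the careful bookkeeping of the nine sub-sums, in particular correctly handling the boundary contributions at $k,\ell\in\{1,n+1\}$ and identifying each sub-sum with the appropriate combinatorial quantity (inversion count, diagonal term, or weighted descent sum).
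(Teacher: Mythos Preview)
Your argument is correct, and it takes a genuinely different route from the paper's proof.

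The paper does not write down a closed formula for the increment $\Delta D_{n+1}$ in terms of $(k,\ell)$. Instead it first proves a \emph{fiber lemma} via a red/blue cell-coloring argument: for each fixed value $\ell$, exactly $n-D_n$ of the $n+1$ cells $\cC_n(k,\ell)$ produce $\Delta D_{n+1}=1$. This immediately gives the marginal count $(n+1)(n-D_n)$ and, by the involution $\bar\pi_n=n-\pi_n$, reduces all four identities to the single case $(a,b)=(1,1)$. That case is then established by \emph{induction on $n$}: one removes the last element $\pi_{n+1}(n+1)$, tracks how the set of $(1,1)$-cells changes across the four possible increment patterns of the removed cell, and checks that the formula propagates.

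Your approach, by contrast, is a one-shot direct computation: the explicit increment formula
\[
\Delta D_{n+1}=\mathbb{1}_{\pi_n(k-1)\geq\ell}+\mathbb{1}_{\pi_n(k)<\ell}-\mathbb{1}_{\pi_n(k-1)>\pi_n(k)}
\]
lets you evaluate $\sum_{k,\ell}\Delta D_{n+1}\Delta D'_{n+1}$ as a sum of nine elementary indicator sums, in which the inversion number $\mathrm{Inv}(\pi_n)$ appears and cancels. This is shorter and avoids induction entirely; it also makes transparent why the answer depends only on $(D_n,D'_n)$ and not on finer statistics of $\pi_n$. On the other hand, the paper's fiber lemma is a strictly stronger intermediate statement than your marginal identity (it gives the count in every horizontal strip, not just the total), and its inductive proof meshes naturally with the growth construction of Section~\ref{S-PS}. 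A minor quibble: the pairs $-AC'-BC'$ and $-CA'-CB'$ do not really ``telescope''; they simply combine because $(n-\ell+1)+(\ell-1)=n$ is constant over the descent positions, giving $-nD'_n$ and $-nD_n$ directly.
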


\begin{proof}
The proof is given in Section \ref{S-PPS}.
\end{proof}


\section{Our martingale approach}
\label{S-MG}
Let $V_n$ be the two-dimensional random vector $V_n=(D_n, D^\prime_n)$.
Denote by $\cF_n$ the $\sigma$-algebra 
$\cF_n=\sigma(D_1, \ldots,D_n,D^\prime_1, \ldots,D^\prime_n)$.
It  follows from Theorem \ref{T-PROBASPACE} that for all $n \geq 1$,
\begin{equation}
\label{DECVN}
V_{n+1}=V_n+\xi_{n+1}
\end{equation}
where
\begin{equation*}
   \dP(\xi_{n+1}  = (a,b)|\cF_n) =\left \{ \begin{array}{ccc}
    {\displaystyle \frac{(n-D_n)(n-D'_n)+n}{(n+1)^2}}  & \text{ if } & (a,b)=(1,1), \vspace{1ex}\\
     {\displaystyle \frac{(n-D_n)(D'_n+1)-n}{(n+1)^2} } & \text{ if }  & (a,b)=(1,0), \vspace{1ex}\\
      {\displaystyle \frac{(D_n+1)(n-D'_n)-n}{(n+1)^2}}  & \text{ if } & (a,b)=(0,1), \vspace{1ex}\\
     {\displaystyle \frac{(D_n+1)(D'_n+1)+n}{(n+1)^2} } & \text{ if }  & (a,b)=(0,0).
   \end{array} \nonumber \right.
\end{equation*}
It implies that the conditional distribution of the marginals of $\xi_{n+1}$ given $\cF_n$ are the correlated Bernoulli $\cB(p_n)$ and $\cB(p^\prime_n)$ distributions where
\begin{equation}
\label{DEFPN}
p_n=\frac{n-D_n}{n+1} \hspace{1cm}\text{and} \hspace{1cm} p_n^\prime=\frac{n-D_n^\prime}{n+1}.
\end{equation}
Moreover, we also have
$$
\dE[\xi_{n+1}\xi_{n+1}^T | \cF_n]=\begin{pmatrix}
p_n & p_np_n^\prime +r_n\\
p_np_n^\prime +r_n & p_n^\prime
\end{pmatrix}
$$
where
$$
r_n=\frac{n}{(n+1)^2}.
$$
Consequently, we obtain from \eqref{DECVN} that
\begin{equation}
\label{CMVN}
\dE[V_{n+1} | \cF_n] = \dE[V_n+ \xi_{n+1} | \cF_n]
= \begin{pmatrix}
D_n + p_n\\
D_n^\prime + p_n^\prime
\end{pmatrix}.
\end{equation}
Let $(M_n)$ be the sequence defined for all $n \geq 1$ by
\begin{equation}
\label{DEFMN}
M_{n} = n \left(V_n
- \frac{(n-1)}{2}v \right)
\end{equation}
where $v^T=(1,1)$.
It follows from \eqref{CMVN} that $(M_n)$ is a locally square integrable martingale.
One can easily see that its predictable quadratic variation reduces to 
\begin{eqnarray}
\label{IPMN}
\langle M \rangle_n  &=& \sum_{k=1}^{n-1} \dE[(M_{k+1}-M_k)(M_{k+1}-M_k)^T|\cF_{k}], \nonumber \\
&=&\sum_{k=1}^{n-1} \begin{pmatrix}
(k-D_k)(D_k+1) & k\\
k & (k-D_k^\prime)(D_k^\prime+1)
\end{pmatrix}.
\end{eqnarray}
Our martingale decomposition \eqref{DEFMN} allows us to propose a straightforward proof
of the asymptotic normality \eqref{CANDN} previously established via a more sophisticated
approach by Chatteerjee and Diaconis \cite{Chatterjee2017}. We can also establish new
asymptotic results such as a functional central limit theorem and a law of iterated logarithm as
follows.

\begin{prop}
\label{P-FCLT}
Let $D([0,\infty[)$ be the Skorokhod space of right-continuous functions with left-hand limits.
We have the distributional convergence in $D([0,+\infty[)$,
\begin{equation}
\label{FCLTDN}
\left(\sqrt{n}
\Bigl(
\frac{D_{\lfloor nt \rfloor}}{\lfloor nt\rfloor} - \frac{1}{2},
\frac{D_{\lfloor nt\rfloor}^\prime}{\lfloor nt\rfloor} - \frac{1}{2}
\Bigr), \ t \geq 0 \right)
\Longrightarrow (W_t, \ t \geq 0)
\end{equation}
where $(W_t)$ is a continuous two-dimensional centered Gaussian process starting from the origin with covariance
matrix given, for $0< s \leq t$, by
$$
\dE[ W_t W_s^T]= \frac{s}{12 t^2}
\begin{pmatrix}
1 & 0\\
0 & 1
\end{pmatrix}.
$$ 
In particular, we find again the asymptotic normality \eqref{CANDN}.
\end{prop}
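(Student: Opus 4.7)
The plan is to deduce the FCLT from the martingale $(M_n)$ of \eqref{DEFMN}. Rearranging that identity yields $V_n = \frac{n-1}{2}v + M_n/n$, so that
$$
\sqrt{n}\left( \frac{V_{\lfloor nt \rfloor}}{\lfloor nt\rfloor} - \frac{v}{2} \right) = \frac{n^2}{\lfloor nt \rfloor^2}\cdot \frac{M_{\lfloor nt\rfloor}}{n^{3/2}} - \frac{\sqrt{n}\,v}{2 \lfloor nt\rfloor}.
$$
The deterministic second term tends to zero uniformly on every compact subinterval of $(0,\infty)$, and $n^2/\lfloor nt\rfloor^2 \to 1/t^2$. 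It therefore suffices to establish the distributional convergence of the rescaled martingale $\widetilde M^{(n)}_t := M_{\lfloor nt \rfloor}/n^{3/2}$ and apply the continuous mapping theorem to the map $f \mapsto f/t^2$.

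For that, I would invoke a multidimensional martingale invariance principle, such as Rebolledo's theorem or Theorem VIII.3.11 of Jacod--Shiryaev. Expanding $\Delta M_{k+1} = V_k + (k+1)\xi_{k+1} - kv$ gives $|\Delta M_{k+1}| \leq Ck$, hence $|\Delta \widetilde M^{(n)}| \leq C/\sqrt{n} \to 0$, so the conditional Lindeberg condition is automatic. The core step is the convergence of the predictable quadratic variation: from \eqref{IPMN},
$$
\langle \widetilde M^{(n)} \rangle_t = \frac{1}{n^3}\sum_{k=1}^{\lfloor nt \rfloor - 1}\begin{pmatrix} (k-D_k)(D_k+1) & k \\ k & (k-D_k^\prime)(D_k^\prime+1) \end{pmatrix}.
$$
The off-diagonal entries are $O(1/n)$ and vanish, while for the diagonal entries the almost sure convergence $D_k/k \to 1/2$ recalled in the introduction, combined with a Stolz--Ces\`aro argument, gives $n^{-3}\sum_{k=1}^{\lfloor nt\rfloor - 1} (k-D_k)(D_k+1) \to t^3/12$ almost surely. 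Thus $\langle \widetilde M^{(n)} \rangle_t \to (t^3/12) I_2$ in probability.

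The invariance principle then produces $\widetilde M^{(n)} \Longrightarrow B$ in the Skorokhod space, where $B$ is a continuous centered Gaussian martingale with independent components and $\langle B \rangle_t = (t^3/12) I_2$. Setting $W_t := B_t/t^2$ for $t > 0$ yields a continuous centered Gaussian process whose covariance is
$$
\dE[W_t W_s^T] = \frac{1}{t^2 s^2}\langle B\rangle_s = \frac{s}{12\,t^2}\begin{pmatrix} 1 & 0 \\ 0 & 1 \end{pmatrix}, \qquad 0 < s \leq t,
$$
as required. The continuous mapping theorem on $D((0,\infty),\dR^2)$ then delivers \eqref{FCLTDN}, and specializing to $t=1$ recovers \eqref{CANDN}. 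The main technical obstacle is the predictable quadratic variation convergence; the cleanest way to handle it is to bypass the almost sure input and argue in $L^2$, using the bound $\dE[(D_k - k/2)^2] = O(k)$ (equivalently, $\dE[\langle M\rangle_n] = O(n^3)$) together with dominated convergence. This yields in-probability convergence, after which tightness on compacts of $(0,\infty)$ follows from Aldous' criterion applied to $\widetilde M^{(n)}$, the jump bound $C/\sqrt{n}$ making the verification immediate.
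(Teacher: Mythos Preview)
Your proposal is correct and follows essentially the same route as the paper's proof: both argue via the martingale $M_n$ of \eqref{DEFMN}, obtain the convergence $n^{-3}\langle M\rangle_{\lfloor nt\rfloor}\to (t^3/12)I_2$ from the almost sure convergence $D_k/k\to 1/2$ (the paper invokes the Toeplitz lemma where you invoke Stolz--Ces\`aro, which is the same device), verify Lindeberg's condition from the deterministic bound $\|\Delta M_k\|\leq Ck$, apply a martingale FCLT (the paper cites Durrett--Resnick, you cite Rebolledo or Jacod--Shiryaev), and finally set $W_t=B_t/t^2$. Your closing paragraph proposing an $L^2$ route and a separate Aldous tightness check is unnecessary: the almost sure convergence of the bracket is already available, and tightness is built into the martingale invariance principles you cite.
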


\begin{prop}
\label{P-QSLLIL}
We have the quadratic strong law
\begin{equation}
\lim_{n\rightarrow \infty} 
\frac{1}{\log n} \sum_{k=1}^n 
\begin{pmatrix}
\frac{D_k}{k} - \frac{1}{2} \\
\frac{D_k^\prime}{k} - \frac{1}{2}
\end{pmatrix}
\begin{pmatrix}
\frac{D_k}{k} - \frac{1}{2} \\
\frac{D_k^\prime}{k} - \frac{1}{2}
\end{pmatrix}^{\!T}
=\frac{1}{12} \begin{pmatrix}
1 & 0\\
0 & 1
\end{pmatrix} \hspace{1cm} \text{a.s.}
\label{QSLDN}
\end{equation}
In particular,
\begin{equation}
\lim_{n\rightarrow \infty} 
\frac{1}{\log n} \sum_{k=1}^n 
\Bigl(\frac{D_k}{k} - \frac{1}{2} \Bigr)^2+
\Bigl(\frac{D_k^\prime}{k} - \frac{1}{2} \Bigr)^2
=\frac{1}{6}  \hspace{1cm} \text{a.s.}
\label{QSLTRACEDN}
\end{equation}
Moreover, we also have the law of iterated logarithm
\begin{equation}
 \limsup_{n \rightarrow \infty} \frac{n}{ 2\log \log n}
 \left( \Bigl(\frac{D_n}{n} - \frac{1}{2} \Bigr)^2+
\Bigl(\frac{D_n^\prime}{n} - \frac{1}{2} \Bigr)^2 \right)
= \frac{1}{6}\hspace{1cm} \text{a.s.}
\label{LILTRACEDN}
\end{equation}
 \end{prop}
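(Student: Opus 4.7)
The plan is to deduce all three statements from the martingale $(M_n)$ introduced in \eqref{DEFMN} by invoking standard multidimensional quadratic strong law and law of iterated logarithm results for square-integrable vector martingales. The bridge with the quantities of interest is the identity
\[
\frac{V_k}{k}-\frac{v}{2}=\frac{M_k}{k^2}-\frac{v}{2k},
\]
which recasts the left-hand side of \eqref{QSLDN} in terms of $\sum_{k=1}^n M_kM_k^T/k^4$ up to deterministic remainders of sizes $O(\|M_k\|/k^3)$ and $O(1/k^2)$. A crude $L^2$-estimate giving $\|M_n\|=O(n^{3/2}\log n)$ almost surely (via Borel--Cantelli) makes both remainders summable in $k$, so they contribute nothing after division by $\log n$; for \eqref{LILTRACEDN} a similar expansion shows that only the leading term $\|M_n\|^2/n^4$ matters.

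The key preliminary is the identification of the asymptotic predictable quadratic variation. Dividing \eqref{IPMN} by $n^3$, the diagonal entries are Ces\`aro averages of $(k-D_k)(D_k+1)/k^2$, which converges almost surely to $1/4$ by the strong law $D_n/n\to 1/2$ noted after \eqref{ANDN}; meanwhile the off-diagonal entries sum to $n(n-1)/2=O(n^2)$ and vanish under the $n^{-3}$ rescaling. Consequently
\[
\lim_{n\to\infty}\frac{1}{n^3}\langle M\rangle_n=\frac{1}{12}\begin{pmatrix}1 & 0\\ 0 & 1\end{pmatrix}\quad\text{a.s.}
\]
The increments $\Delta M_n$ are bounded by a deterministic multiple of $n$, so $\|\Delta M_n\|^2/\mathrm{tr}(\langle M\rangle_n)=O(1/n)\to 0$ and the Lindeberg-type hypotheses required below hold automatically.

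For \eqref{QSLDN} I would apply a matrix form of the quadratic strong law for vector martingales (Chaabane--Maaouia, or the Bercu-type statement already used in \cite{Bercu2024}), which under the above growth condition yields
\[
\lim_{n\to\infty}\frac{1}{\log n}\sum_{k=1}^n\frac{M_kM_k^T}{k^4}=\frac{1}{12}\begin{pmatrix}1 & 0\\ 0 & 1\end{pmatrix}\quad\text{a.s.}
\]
Combined with the negligibility of the correction terms coming from the identity above, this is exactly \eqref{QSLDN}, and taking the trace gives \eqref{QSLTRACEDN}. For \eqref{LILTRACEDN} I would invoke the corresponding vector-valued martingale LIL in its trace form, delivering
\[
\limsup_{n\to\infty}\frac{\|M_n\|^2}{2\,\mathrm{tr}(\langle M\rangle_n)\log\log\mathrm{tr}(\langle M\rangle_n)}=1\quad\text{a.s.}
\]
Since $\mathrm{tr}(\langle M\rangle_n)\sim n^3/6$ and $\log\log\mathrm{tr}(\langle M\rangle_n)\sim \log\log n$, this rewrites as $\limsup\|M_n\|^2/(n^3\log\log n)=1/3$ a.s., and substituting $V_n/n-v/2=M_n/n^2-v/(2n)$ yields exactly \eqref{LILTRACEDN}.

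The main obstacle is matching the precise normalization of the off-the-shelf vector QSL and LIL statements to the present setting and verifying their hypotheses rigorously; once $\langle M\rangle_n/n^3\to (1/12)I_2$ and the increment bound are established, the remainder of the argument is essentially bookkeeping around the deterministic shift $(n-1)v/2$.
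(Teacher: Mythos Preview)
Your proposal follows essentially the same strategy as the paper: both work with the martingale $M_n$ of \eqref{DEFMN}, establish $\langle M\rangle_n/n^3\to(1/12)I_2$ a.s.\ from \eqref{IPMN} and the strong law $D_n/n\to 1/2$, check the bounded-increment condition $\|\Delta M_n\|\le 2n$, and then invoke off-the-shelf martingale QSL/LIL theorems, absorbing the deterministic shift $v/(2k)$ into negligible remainders.

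The only difference in packaging is that the paper first projects onto a fixed direction, writing $M_n(u)=\langle u,M_n\rangle=\sum_{k\le n}k\,\varepsilon_k(u)$, applies the \emph{scalar} quadratic strong law of \cite{Bercu2004} to obtain $\lim(\log n)^{-1}\sum_k M_k(u)^2/k^4=\|u\|^2/12$ for every $u$, and then upgrades this family of scalar limits to the matrix identity \eqref{QSLDN} via Duflo's polarization device (Proposition~4.2.8 in \cite{Duflo1997}); you instead call a matrix/vector QSL directly. For \eqref{QSLDN}--\eqref{QSLTRACEDN} this is a cosmetic difference and your route is, if anything, a bit cleaner.

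For \eqref{LILTRACEDN} you should be more careful about the black box you invoke. A ``trace-form'' LIL of the shape $\limsup \|M_n\|^2/(2\,\mathrm{tr}\langle M\rangle_n\,\log\log\mathrm{tr}\langle M\rangle_n)=1$ is not a standard statement: the compact LIL for vector martingales identifies a cluster set, and the $\limsup$ of $\|\cdot\|^2$ is governed by the largest eigenvalue of the limiting covariance rather than its trace, so the constant you extract needs a precise reference. The paper sidesteps this by applying Stout's one-dimensional LIL to each $M_n(u)$ and then passing to $\|M_n\|^2$; note, however, that the paper's step from the scalar LIL for every $u$ to the claimed value $1/6$ is itself asserted without an explicit lower-bound argument, so neither version is fully rigorous at this point.
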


\begin{proof}
The proofs are postponed to Section \ref{S-PMG}
\end{proof}


\section{Main results}
\label{S-MR}

Our first result is the LDP for the couple $(D_n/n,D^\prime_n/n)$ which extends the
LDP for $(D_n/n)$ previously established by Bryc, Minda and Sethuraman
\cite{Bryc2009}.

\begin{thm}
\label{T-LDP}
The couple $(D_n/n,D'_n/n)$ satisfies an LDP with good rate function $I$ given by 
\eqref{DEFCI}.
\end{thm}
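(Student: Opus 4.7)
The natural approach is to apply the Gärtner--Ellis theorem to the two-dimensional sequence $(D_n/n, D_n'/n)$. Define the normalized log-moment generating function
$$
\Lambda_n(s,t) := \frac{1}{n}\log \dE\bigl[\exp(sD_n + tD_n')\bigr], \qquad (s,t)\in \dR^2,
$$
and aim to prove
\begin{equation}
\lim_{n\to\infty} \Lambda_n(s,t) = L(s) + L(t) \quad \text{for all } (s,t) \in \dR^2,
\label{eq:cgftargets}
\end{equation}
where $L$ is the function in \eqref{DEFL}. Since $L$ is $C^\infty$, strictly convex, and essentially smooth on $\dR$, the candidate limit $\Lambda(s,t) = L(s) + L(t)$ is itself essentially smooth on $\dR^2$, so Gärtner--Ellis then delivers the LDP. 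Moreover, the supremum separates across a sum of functions of disjoint variables:
$$
\Lambda^*(x,y) = \sup_{(s,t)\in \dR^2}\bigl\{sx + ty - L(s) - L(t)\bigr\} = I(x) + I(y) = I(x,y),
$$
which is precisely \eqref{DEFCI}, and goodness of $I(x,y)$ follows from goodness of the marginal rate function $I$.

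To obtain \eqref{eq:cgftargets} I would exploit the martingale framework of Section \ref{S-MG}. A direct algebraic rearrangement of the conditional law of $\xi_{n+1}$ coming from Theorem \ref{T-PROBASPACE} yields
$$
\dE\bigl[e^{s\xi^{(1)}_{n+1} + t\xi^{(2)}_{n+1}} \mid \cF_n\bigr] = \frac{(a_n e^s + b_n)(a_n' e^t + b_n') + n(e^s-1)(e^t-1)}{(n+1)^2},
$$
with $a_n = n-D_n$, $b_n = D_n+1$, and similarly for the primed versions. Crucially, the factorized product term has magnitude $\Theta(n^2)$ while the interaction term $n(e^s-1)(e^t-1)$ is only $\Theta(n)$; this order-of-magnitude gap is the analytic source of the asymptotic independence at the LDP scale.

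My proposed execution is a bootstrap/self-improving scheme. An \emph{a priori} crude upper bound is
$$
\Lambda_n(s,t) \leq \tfrac{1}{2n}\log \dE[e^{2sD_n}] + \tfrac{1}{2n}\log \dE[e^{2tD_n'}] \to \tfrac{1}{2}L(2s) + \tfrac{1}{2}L(2t),
$$
obtained by Cauchy--Schwarz and the marginal LDP of \cite{Bryc2009}. Combined with the one-step recursion above, this forces $D_n/n$ and $D_n'/n$ to stay in $[0,1]$ with exponential tails, and an inductive refinement---leveraging the $O(1/n)$ smallness of the correlation correction---sharpens $\limsup \Lambda_n(s,t)$ to $L(s)+L(t)$. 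The matching lower bound is obtained by exponential tilting: define $d\dP_n^{(s,t)}/d\dP \propto e^{sD_n + tD_n'}$, use the same recursion to show that under $\dP_n^{(s,t)}$ the pair $(D_n/n, D_n'/n)$ concentrates at $(L'(s), L'(t))$, and read off the lower bound from the definition of the tilt. The one-variable consistency check is clean: $(1-L'(s))e^s + L'(s) = (e^s-1)/s = e^{L(s)}$, so the two-variable analogue factorizes into the product $e^{L(s)+L(t)}$ precisely when the correlation term is negligible.

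The principal difficulty is the circularity between \eqref{eq:cgftargets} and the concentration of the tilted law---the latter being itself essentially a large deviation statement for the original law. The bootstrap structure is designed to break this circularity, and what makes it work is precisely the $\Theta(1/n)$ magnitude of the interaction correction in the one-step recursion; this is the quantitative shadow, at the LDP scale, of the asymptotic independence that was first noticed at the CLT scale in \cite{Chatterjee2017}.
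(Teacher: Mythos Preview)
Your high-level plan matches the paper's exactly: prove the pointwise limit $\Lambda_n(s,t)\to L(s)+L(t)$ and invoke G\"artner--Ellis. The divergence is in how that limit is obtained. You propose to iterate the one-step conditional identity coming from Theorem~\ref{T-PROBASPACE} and run a bootstrap; the paper instead invokes a \emph{closed-form expression} for $m_n(t,s)=\dE[e^{tD_n+sD_n'}]$. This is Lemma~\ref{LEM-mn}, which rests on the classical two-sided Eulerian identity \eqref{EQ-FCTGENE} of Carlitz--Roselle--Scoville. After expanding the rising factorial via unsigned Stirling numbers and a polylogarithm series, one finds
\[
m_n(t,s)=\Bigl(\frac{e^t-1}{t}\Bigr)^{\!n}\Bigl(\frac{e^s-1}{s}\Bigr)^{\!n}\times(\text{factor bounded uniformly in }n),
\]
so the limit \eqref{LIMLN} follows at once by dominated convergence (Lemmas~\ref{LEM-Stirling} and~\ref{LEM-bound-r}). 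The entire LDP proof is then three lines once Lemma~\ref{LEM-mn} is available.

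Your recursion route, by contrast, has a genuine gap that you yourself identify but do not close. The conditional MGF identity you display is correct, but taking expectations does not yield a closed scalar recursion: the right-hand side contains $\dE[D_nD_n'\,e^{sD_n+tD_n'}]=\partial_s\partial_t m_n(s,t)$ and its lower-order cousins, so $m_n$ is coupled to its derivatives. Your bootstrap is meant to handle this, but it is circular in exactly the way you flag: to argue that the $\Theta(n)$ cross-term is negligible relative to the $\Theta(n^2)$ product term, you need the tilted law $e^{sD_n+tD_n'}\,d\dP$ to concentrate $(D_n/n,D_n'/n)$ near $(L'(s),L'(t))$, and that concentration is essentially the LDP you are trying to prove. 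The Cauchy--Schwarz seed gives only $\tfrac12 L(2s)+\tfrac12 L(2t)$, strictly larger than $L(s)+L(t)$ for nonzero $(s,t)$, and nothing in the sketch explains how one pass through the recursion would shrink this gap without already assuming the tilted concentration. The paper sidesteps the whole difficulty by having the exact joint MGF in hand.
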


Our second result is the SLDP for the sequence $(D_n,D'_n)$ which nicely improves the SLDP 
recently established for $(D_n)$ in Theorem 3.1 of \cite{Bercu2024}. For any positive real number $x$ and $n \in \mathbb{N}^*$, 
denote $\{x_n\}=nx-\lceil (n-1)x \rceil$. Let us remark that $\{x_n\} \in [x-1,x]$ for any $n \in \mathbb{N}^*$. 
Instead of  dividing $(D_n,D^\prime_n)$ by $n$,  it is more natural to divide them by $n-1$ for the following reason.
Let $A_n$ be the number of ascents of $\pi_n \in \cS_n$. Then, it is clear that 
 $D_n+A_n=n-1$. Moreover, $D_n$ and $A_n$ share the same distribution. Consequently, $D_n/(n-1)$ is symmetric 
 with respect to $1/2$. The same applies for $D'_n$.

\begin{thm}
\label{T-SLDP}
For any $x$ and $y$ in $]1/2,1[$, we have
\begin{equation}
\label{SLDPR}
\dP \Big(\frac{D_n}{n-1} \geq x, \frac{D'_n}{n-1} \geq y  \Big) = 
\frac{\exp(-nI(x,y)+\varphi_n(x,y))}{2\pi n \sigma_x t_x \sigma_y t_y }
\big[ 1+o(1) \big]
\end{equation} 
where the value $t_x$ is the unique solution of $L^\prime(t_x)=x$, $\sigma_x^2=L^{\prime \prime}(t_x)$ and 
\begin{equation}
\label{COFACTPLUS}
\varphi_n(x,y)=\{x_n\}t_x+\{y_n\}t_y+\frac{1}{2}t_x t_y.
\end{equation}
In the same spirit, we have for any $x$ and $y$ in $]1/2,1[$,
\begin{equation}
\label{SLDPR2}
\dP \Big(\frac{D_n}{n-1} \leq 1-x, \frac{D'_n}{n-1} \leq 1-y  \Big) = 
\frac{\exp(-nI(x,y)+\varphi_n(x,y))}{2\pi n \sigma_x t_x \sigma_y t_y }
\big[ 1+o(1) \big].
\end{equation}
Moreover, we also have for any $x$ and $y$ in $]1/2,1[$,
\begin{equation}
\label{SLDPR3}
\dP \Big(\frac{D_n}{n-1} \leq 1-x, \frac{D'_n}{n-1} \geq y  \Big) = 
\frac{\exp(-nI(x,y)+\phi_n(x,y))}{2\pi n \sigma_x t_x \sigma_y t_y }
\big[ 1+o(1) \big],
\end{equation}
and
\begin{equation}
\label{SLDPR4}
\dP \Big(\frac{D_n}{n-1} \geq x, \frac{D'_n}{n-1} \leq 1-y  \Big) =
\frac{\exp(-nI(x,y)+\phi_n(x,y))}{2\pi n \sigma_x t_x \sigma_y t_y }
\big[ 1+o(1) \big] 
\end{equation}
where
\begin{equation}
\label{COFACTMINUS}
\phi_n(x,y)=\{x_n\}t_x+\{y_n\}t_y-\frac{1}{2}t_x t_y.
\end{equation}
\end{thm}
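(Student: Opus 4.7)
My plan is to follow the classical SLDP strategy via Cram\'er's exponential change of probability, applied jointly to the pair $(D_n,D_n^\prime)$ and exploiting the martingale structure of Section \ref{S-MG}. The heart of the argument is to establish a sharp asymptotic expansion of the joint Laplace transform $\Phi_n(s,t)=\dE[e^{sD_n+tD_n^\prime}]$ in a complex neighbourhood of the saddle point $(t_x,t_y)$ defined by $L'(t_x)=x$ and $L'(t_y)=y$. Using Theorem \ref{T-PROBASPACE}, I would derive a recursion for $\Phi_n$ by conditioning on $\cF_n$ and summing the four transition probabilities of $\xi_{n+1}$ weighted by $e^{s\Delta D_{n+1}+t\Delta D_{n+1}^\prime}$. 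The target is to show that, uniformly on compact sets,
$$
\Phi_n(s,t) \,=\, \Phi_n(s,0)\,\Phi_n(0,t)\,e^{st/2}\,\bigl(1+o(1)\bigr).
$$
The precise constant $\tfrac12$ is forced by the limit of the covariance $c_n=\mathrm{Cov}(D_n,D_n^\prime)$: a direct computation based on the martingale decomposition \eqref{DECVN} yields the linear recursion $c_{n+1}=\bigl(n^2/(n+1)^2\bigr)c_n+n/(n+1)^2$ with $c_1=0$, whose explicit solution is $c_n=(n-1)/(2n)$, converging monotonically to $1/2$.

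With this joint expansion in hand, for fixed $(x,y)\in{}]1/2,1[\times{}]1/2,1[$ I would introduce the tilted probability $\wt{\dP}$ defined by
$$
\frac{d\wt{\dP}}{d\dP} \,=\, \frac{e^{t_x D_n + t_y D_n^\prime}}{\Phi_n(t_x,t_y)},
$$
and write, with $a_n=\lceil(n-1)x\rceil$ and $b_n=\lceil(n-1)y\rceil$,
$$
\dP\bigl(D_n\geq a_n,\,D_n^\prime\geq b_n\bigr) \,=\, \Phi_n(t_x,t_y)\,e^{-t_x a_n-t_y b_n}\,\wt{\dE}\bigl[e^{-t_x(D_n-a_n)-t_y(D_n^\prime-b_n)}\ind_{\{D_n\geq a_n,\,D_n^\prime\geq b_n\}}\bigr].
$$
Combining the joint expansion of $\Phi_n$ with the sharp single-variable expansion of $\Phi_n(s,0)$ borrowed from \cite{Bercu2024}, the non-random prefactor produces $\exp(-nI(x,y))$ together with the discrete corrections $\{x_n\}t_x+\{y_n\}t_y$, which arise from the gap between $nx$ and the integer $a_n$, and the cross correction $\tfrac12 t_x t_y$ coming from the $e^{st/2}$ factor; together these give exactly $\varphi_n(x,y)$ of \eqref{COFACTPLUS}. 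To evaluate the residual tilted expectation I would apply a two-dimensional local central limit theorem to $(D_n,D_n^\prime)$ under $\wt{\dP}$: the means lie at $(a_n,b_n)+O(1)$, the diagonal covariances grow as $n\sigma_x^2$ and $n\sigma_y^2$, and the cross-covariance remains $O(1)$. A bivariate Riemann-sum approximation of the geometric series $\sum_{i,j\geq 0}e^{-t_x i-t_y j}$ against the Gaussian kernel then yields the prefactor $1/(2\pi n \sigma_x t_x \sigma_y t_y)$, modulo $(1+o(1))$. This establishes \eqref{SLDPR}, while \eqref{SLDPR2} follows at once from the distributional symmetry $(D_n,D_n^\prime)\stackrel{d}{=}(n-1-D_n,n-1-D_n^\prime)$ obtained by conjugating $\pi_n$ with the order-reversing involution.

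For the mixed-tail formulas \eqref{SLDPR3}--\eqref{SLDPR4}, I would replay the whole argument with tilt parameters $(-t_x,t_y)$ or $(t_x,-t_y)$; the cross term in the joint Laplace expansion then contributes $-\tfrac12 t_x t_y$ in place of $+\tfrac12 t_x t_y$, which is precisely the sign change between $\varphi_n$ and $\phi_n$ in \eqref{COFACTMINUS}. The main obstacle I anticipate is establishing the joint Laplace expansion with the precise constant $e^{st/2}$: matching the limit of the covariance alone only pins down the second mixed cumulant, whereas one must rigorously show that every higher mixed cumulant of $(D_n,D_n^\prime)$ vanishes uniformly on a compact complex neighbourhood of $(t_x,t_y)$. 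This calls for a careful iterative analysis of the $\Phi_n$ recursion in the same spirit as the one-dimensional treatment of \cite{Bercu2024}, together with the uniform error bounds required to validate the 2D local CLT through a bivariate Fourier inversion argument.
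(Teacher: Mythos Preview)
Your overall strategy---Cram\'er tilt at $(t_x,t_y)$, a joint Laplace expansion yielding the cross factor $e^{st/2}$, then a bivariate local CLT under the tilted law---is sound in principle, and your computation of the covariance $c_n=(n-1)/(2n)\to 1/2$ is correct and correctly anticipates the constant $\tfrac12$. However, the route taken in the paper is quite different and, for the hardest step, considerably shorter.

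The paper does not work with the martingale recursion for $\Phi_n$ at all. Instead it invokes the classical bivariate generating function of Carlitz--Roselle--Scoville (also in Petersen and in \cite{Chatterjee2017}) to obtain an \emph{explicit closed form} for the joint Laplace transform (Lemma~\ref{LEM-mn}):
\[
m_n(t,s)=\Bigl(\tfrac{e^t-1}{t}\Bigr)^n\Bigl(\tfrac{e^s-1}{s}\Bigr)^n\Bigl(\tfrac{1-e^{-t}}{t}\Bigr)\Bigl(\tfrac{1-e^{-s}}{s}\Bigr)\,S_n^1(t,s),
\qquad
S_n^0(t,s)=\sum_{k=0}^{n-1}{n\brack n-k}\Bigl(\tfrac{(n-k)!}{n!}\Bigr)^{\!2}(st)^k.
\]
The cross factor $e^{t_xt_y/2}$ then drops out \emph{algebraically} from the Stirling-number asymptotic ${n\brack n-k}\bigl((n-k)!/n!\bigr)^2\to 1/(2^k k!)$ (Lemma~\ref{LEM-Stirling}), which gives $S_n^0(t_x,t_y)\to e^{t_xt_y/2}$ by dominated convergence. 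With this formula in hand, the paper writes $\dP(D_n=k,D'_n=k')$ by Fourier inversion, sums the two geometric tails, and applies a direct two-dimensional Laplace method to the resulting integral; no tilted local CLT is needed.

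By contrast, your proposed recursion for $\Phi_n$, obtained by conditioning on $\cF_n$, is genuinely of PDE type: the transition probabilities of Theorem~\ref{T-PROBASPACE} contain the product $(n-D_n)(n-D'_n)$, so the recursion couples $\Phi_n$, $\partial_s\Phi_n$, $\partial_t\Phi_n$ and $\partial_s\partial_t\Phi_n$. This is a qualitative jump from the one-dimensional case of \cite{Bercu2024}, where only a first-order (ODE-type) recursion appears. Controlling this system uniformly on a complex neighbourhood of $(t_x,t_y)$ so as to isolate the factor $e^{st/2}$ with $o(1)$ remainder is exactly the ``main obstacle'' you flag, and you have not yet indicated a mechanism to close it; matching the second mixed cumulant does not by itself bound the higher ones. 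The paper's use of the explicit generating function sidesteps this difficulty entirely, which is the essential simplification your outline is missing.
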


\noindent
By using a direct modification in the proof of Theorem 3.1 in \cite{Bercu2024}, we obtain that for any $x$ in $]1/2,1[$,
\begin{equation*}
\dP \Big(\frac{D_n}{n-1} \geq x\Big)=\dP \Big(\frac{D'_n}{n-1} \geq x\Big) = \frac{\exp(-nI(x)+\{x_n\}t_x)}{\sigma_x t_x \sqrt{2\pi n}}
\big[ 1+o(1) \big].
\end{equation*}
Moreover, we have by symmetry that
\begin{equation*}
    \dP \Big(\frac{D_n}{n-1} \geq x\Big)=\dP \Big(\frac{D_n}{n-1} \leq 1-x\Big).
\end{equation*}
Therefore, an immediately consequence of Theorem \ref{T-SLDP} is as follows.
\begin{cor} 
\label{C-DEPSLDP}
For all $x,y \in ]1/2,1[$, we have
\begin{align*}
    \dP \Big(\frac{D_n}{n-1} \geq x, \frac{D'_n}{n-1} \geq y  \Big) &= \dP \Big(\frac{D_n}{n-1} \geq x \Big) \dP \Big( \frac{D'_n}{n-1} \geq y  \Big) \exp\left( \frac{t_x t_y}{2}\right)
\big[ 1+o(1) \big],\\
\dP \Big(\frac{D_n}{n-1} \leq 1-x, \frac{D'_n}{n-1} \leq 1-y  \Big) &= \dP \Big(\frac{D_n}{n-1} \leq 1-x \Big) \dP \Big( \frac{D'_n}{n-1} \leq 1-y  \Big) \exp\left( \frac{t_x t_y}{2}\right)
\big[ 1+o(1) \big],\\
\dP \Big(\frac{D_n}{n-1} \leq 1-x, \frac{D'_n}{n-1} \geq y  \Big) &= \dP \Big(\frac{D_n}{n-1} \leq 1-x \Big) \dP \Big( \frac{D'_n}{n-1} \geq y  \Big) \exp\left( -\frac{t_x t_y}{2}\right)
\big[ 1+o(1) \big],\\
\dP \Big(\frac{D_n}{n-1} \geq x, \frac{D'_n}{n-1} \leq 1-y  \Big) &= \dP \Big(\frac{D_n}{n-1} \geq x \Big) \dP \Big( \frac{D'_n}{n-1} \leq 1-y  \Big) \exp\left( -\frac{t_x t_y}{2}\right)
\big[ 1+o(1) \big].
\end{align*}
\end{cor}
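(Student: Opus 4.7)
The plan is to obtain Corollary \ref{C-DEPSLDP} as a direct division: each joint probability expansion from Theorem \ref{T-SLDP} is compared with the product of the two one-dimensional expansions recalled just before the corollary. Since everything on the right-hand side of the corollary is prescribed to hold up to a multiplicative $1+o(1)$, no new large deviation estimate is required; only bookkeeping of the prefactors and of the subexponential corrections $\varphi_n$ and $\phi_n$.

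More precisely, I would first write down the marginal expansion
\[
\dP\!\Big(\tfrac{D_n}{n-1}\geq x\Big)\dP\!\Big(\tfrac{D'_n}{n-1}\geq y\Big)
=\frac{\exp\bigl(-n(I(x)+I(y))+\{x_n\}t_x+\{y_n\}t_y\bigr)}{2\pi n\,\sigma_x t_x \sigma_y t_y}\bigl[1+o(1)\bigr],
\]
using the recalled result for $D_n$ and the equality in law of $D_n$ and $D'_n$ (so that the same expansion holds for $D'_n$), together with $I(x,y)=I(x)+I(y)$ from \eqref{DEFCI}. Next, I would quote \eqref{SLDPR} and observe that $\varphi_n(x,y)-\{x_n\}t_x-\{y_n\}t_y=\tfrac{1}{2}t_xt_y$; taking the ratio of \eqref{SLDPR} by the product above produces the claimed factor $\exp(t_xt_y/2)$. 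The identity \eqref{SLDPR2} is handled identically, after invoking the symmetry $\dP(D_n/(n-1)\leq 1-x)=\dP(D_n/(n-1)\geq x)$ recalled just before the corollary.

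For the two mixed cases \eqref{SLDPR3} and \eqref{SLDPR4}, the same computation applies, but with $\varphi_n$ replaced by $\phi_n$ from \eqref{COFACTMINUS}. Since $\phi_n(x,y)-\{x_n\}t_x-\{y_n\}t_y=-\tfrac{1}{2}t_xt_y$, the quotient of the joint probability by the product of marginals becomes $\exp(-t_xt_y/2)$, which is exactly the third and fourth lines of the corollary. Again the symmetry of marginals is used to rewrite $\dP(D_n/(n-1)\leq 1-x)$ as $\dP(D_n/(n-1)\geq x)$ when needed.

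There is no real obstacle here: the genuine work has been done in Theorem \ref{T-SLDP} and in the marginal SLDP of \cite{Bercu2024}. The only point deserving a little care is to make sure that the $1+o(1)$ errors combine correctly when dividing the joint expansion by the product of the marginals, which is legitimate because all four asymptotic expressions are strictly positive for $x,y\in\,]1/2,1[$ and the denominators $\sigma_x t_x \sigma_y t_y$ are bounded away from $0$ on any compact subset of $]1/2,1[^2$. Once this is noted, the four identities of Corollary \ref{C-DEPSLDP} follow by direct substitution.
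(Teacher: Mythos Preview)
Your proposal is correct and follows essentially the same approach as the paper, which simply states that the corollary is an immediate consequence of Theorem \ref{T-SLDP} combined with the marginal expansion and the symmetry $\dP(D_n/(n-1)\geq x)=\dP(D_n/(n-1)\leq 1-x)$. Your additional remarks on the legitimacy of dividing the $1+o(1)$ factors are fine (and in fact for fixed $x,y$ this is automatic since all quantities involved are strictly positive), so nothing further is needed.
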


\begin{rem}
Corollary \ref{C-DEPSLDP} highlights the very
fine dependence between $D_n$ and $D^\prime_n$ at the sharp large deviation level.
\end{rem}

\noindent
Chatteerjee and Diaconis found challenging to study the asymptotic behavior of
$$
T_n=D_n+D^\prime_n.
$$
They proved in Theorem 1.1 of \cite{Chatterjee2017} that
\begin{equation}
\label{ANT}
\sqrt{n}
\left(
\frac{T_n}{n} - 1\right)
\underset{n\rightarrow+\infty}{\overset{\cL}{\longrightarrow}}
\cN \Bigl(0, \frac{1}{6}\Bigr).
\end{equation}
We immediately deduce from Theorem \ref{T-LDP} together with the contraction principle
\cite{Dembo1998} that $(T_n/n)$ satisfies an LDP as follows.
\begin{cor} 
The sequence $(T_n/n)$ satisfies an LDP with good rate function $J$ defined, for all $y \in \dR$, by
\begin{equation}
\label{DEFJ}
J(y) = \inf \Bigl\{ I(x)+I(y-x), \ x\in \dR \Bigr\}.
\end{equation} 
\end{cor}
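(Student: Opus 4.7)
The plan is to apply the contraction principle directly to the joint LDP provided by Theorem \ref{T-LDP}. The key observation is that $T_n/n$ is a continuous image of the pair $(D_n/n,D_n'/n)$: namely, if we set $f : \dR^2 \to \dR$ with $f(x,y)=x+y$, then $T_n/n = f(D_n/n, D_n'/n)$. The function $f$ is continuous on all of $\dR^2$, so the hypothesis of the contraction principle (see for instance Theorem 4.2.1 in \cite{Dembo1998}) is trivially satisfied.

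First I would recall from Theorem \ref{T-LDP} that $(D_n/n,D_n'/n)$ satisfies an LDP with good rate function $I(x,y)=I(x)+I(y)$. The fact that $I$ is a good rate function on $\dR^2$ (i.e.\ has compact level sets) follows from the corresponding property of the one-dimensional rate function $I$ defined in \eqref{DEFI}, since the level set $\{I(x,y)\leq \alpha\}$ is a closed subset of the compact product $\{I(x)\leq \alpha\}\times\{I(y)\leq \alpha\}$.

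Then, applying the contraction principle to the continuous map $f$, the sequence $(T_n/n)$ satisfies an LDP with good rate function
\begin{equation*}
J(y) = \inf\bigl\{ I(x_1,x_2) : (x_1,x_2) \in \dR^2,\ x_1+x_2=y \bigr\}
= \inf\bigl\{ I(x)+I(y-x) : x \in \dR \bigr\},
\end{equation*}
which is exactly \eqref{DEFJ}. The infimum is attained since $I$ has compact level sets, so $J$ inherits the good-rate-function property automatically from the contraction principle.

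There is essentially no technical obstacle here; the whole argument is a one-line invocation of the contraction principle. If anything, the only point worth checking is the goodness of $I(x,y)=I(x)+I(y)$, which, as noted above, reduces to the well-known fact that the Legendre transform $I$ of the smooth strictly convex function $L$ in \eqref{DEFL} has compact level sets.
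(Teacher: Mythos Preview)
Your proof is correct and matches the paper's approach exactly: the paper simply states that the result follows immediately from Theorem \ref{T-LDP} together with the contraction principle, which is precisely what you do via the continuous map $f(x,y)=x+y$.
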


\section{Conclusion}
\label{S-C}

Our strategy of proof may be extended to some of other
well-known statistics on random permutations. More precisely, if we are interested
in the major index \cite{MacMahon1913}, 
a central limit theorem for the joint distribution of the major
index and the major index of the inverse has been obtained in 
\cite{Baxter2010, Swanson2022}. However, the LDP
is available only for the marginal distribution \cite{Meliot2022}. The LDP for the joint distribution,
or even for the quadruplet distribution adding the couple $(D_n, D'_n)$ is still an open problem
and may be tackled using some old results concerning their Laplace transform
\cite{Garsia1979, Roselle1974}, see also the link with inversions in \cite{Foata1978}. These questions are left for future work.

\section{Proofs of the large deviation results}
\label{S-PLDP}

\subsection{Some preliminary results}

Denote by $m_n$ the Laplace transform of the couple $(D_n,D'_n)$ defined, for all 
$t,s \in \dR$, by 
\begin{equation*}
    m_n(t,s)  = \dE \bigl[\exp(t D_n + s D'_n) \bigr].
\end{equation*}
One can notice that $m_n(t,s)$ is always finite for all $t,s \in \mathbb{R}$ since $D_n$ and $D_n'$ are bounded by $n$. We shall make use of the
unsigned Stirling numbers of the first kind, which are defined algebraically as the coefficients of the rising factorial via the identity given for all $x \in \dR$ and $n \in \dN$,
\begin{equation}
\label{RISEFACT}
    (x)^{(n)} =x(x+1) \cdots(x+n-1)= \sum_{k=0}^n {n \brack k} x^k.
\end{equation}

\begin{lem}
\label{LEM-mn}
For all $t,s \in \mathbb{R}^*$, we have
\begin{equation}
\label{EQ-LEM-mn}
m_n(t,s) = \left(\frac{e^t-1}{t}\right)^n\left(\frac{e^s-1}{s}\right)^n\left(\frac{1-e^{-t}}{t}\right)\left(\frac{1-e^{-s}}{s}\right)S_n^{1}(t,s) 
\end{equation}
where for $a \in \{0,1\}$,
\begin{equation*}
    S_n^{a}(t,s) = \sum_{k=0}^{n-1} { n\brack n-k} \left(\frac{(n-k)!}{n!}\right)^2 (st)^k
    \bigl(1+a r_{n-k}(t)\bigr)\bigl(1+a r_{n-k}(s)\bigr)
\end{equation*}
and 
\begin{equation*}
    r_{n}(t) = \sum_{\ell \in \mathbb{Z}^*} \left(1+\frac{2i\pi \ell}{t}\right)^{-(n+1)}.
\end{equation*}
\end{lem}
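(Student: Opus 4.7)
The plan is to derive \eqref{EQ-LEM-mn} from the classical bivariate generating function identity
\begin{equation*}
\sum_{\pi \in \cS_n} x^{D_n(\pi)+1} y^{D_n(\pi^{-1})+1} = (1-x)^{n+1}(1-y)^{n+1} \sum_{i,j \geq 1} \binom{ij+n-1}{n} x^i y^j,
\end{equation*}
a descent/inverse-descent bivariate analogue of Worpitzky's identity available in \cite{Garsia1979, Roselle1974}, combined with the Mittag--Leffler expansion of $1/(e^u-1)$ over its poles at $2\pi i \dZ$. Substituting $x=e^t$, $y=e^s$ (initially for $t,s$ with negative real part, then extended by analyticity since $m_n$ is entire on $\dR^2$), dividing by $n!$ and using $(1-e^t)^{n+1}(1-e^s)^{n+1}=(e^t-1)^{n+1}(e^s-1)^{n+1}$, I get
\begin{equation*}
m_n(t,s) = \frac{e^{-(t+s)}}{n!}(e^t-1)^{n+1}(e^s-1)^{n+1} \sum_{i,j\geq 1}\binom{ij+n-1}{n}\, e^{it+js}.
\end{equation*}

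The next step is to expand the binomial coefficient via the rising factorial identity \eqref{RISEFACT} applied at $x=ij$, namely $\binom{ij+n-1}{n}=\frac{1}{n!}\sum_{k=1}^{n} {n\brack k}(ij)^k$ (the $k=0$ term vanishes since ${n\brack 0}=0$ for $n\geq 1$), swap the summations, and factorize the resulting inner double sum as a product $A_k(t)A_k(s)$ with
\begin{equation*}
A_k(t):=\sum_{i\geq 1} i^k e^{it}=\frac{d^k}{dt^k}\!\left(\frac{e^t}{1-e^t}\right).
\end{equation*}
Invoking the Mittag--Leffler expansion of $1/(e^u-1)$ (equivalently, the partial fraction decomposition over its simple poles at $2\pi i\dZ$) and differentiating $k$ times termwise (which is legitimate for $k\geq 1$, as the differentiated series becomes absolutely convergent), I expect to obtain
\begin{equation*}
A_k(t)=\frac{(-1)^{k+1}k!}{t^{k+1}}\bigl(1+r_k(t)\bigr),
\end{equation*}
where isolating the $\ell=0$ term produces the prefactor $t^{-(k+1)}$ and pulling $t^{-(k+1)}$ out of the remaining summands produces exactly $r_k(t)$. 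The sign $(-1)^{k+1}$ squares away in the product, leaving $A_k(t)A_k(s)=(k!)^2(ts)^{-(k+1)}(1+r_k(t))(1+r_k(s))$.

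Finally, substituting back and reindexing via $j=n-k$ so that $j$ runs from $1$ to $n$, the remaining sum matches $(n!)^2(ts)^{-(n+1)} S_n^1(t,s)$ exactly. The prefactor $(ts)^{-(n+1)}$ combines with $(e^t-1)^{n+1}(e^s-1)^{n+1}$ to form $\left(\frac{e^t-1}{t}\right)^{n+1}\left(\frac{e^s-1}{s}\right)^{n+1}$, and the overall $e^{-(t+s)}$ converts one copy of each factor via $e^{-t}(e^t-1)/t=(1-e^{-t})/t$, producing exactly \eqref{EQ-LEM-mn}. The step I expect to be the main obstacle is the Mittag--Leffler computation: justifying the termwise $k$-fold differentiation and carefully tracking the $(-1)^{k+1}$ sign to verify it cancels in the product $A_k(t)A_k(s)$. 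The rest reduces to bookkeeping of powers of $t,s$ and matching indices with the definition of $S_n^1$.
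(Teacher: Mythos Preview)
Your proposal is correct and follows essentially the same route as the paper: start from the bivariate generating function identity, substitute $p=e^t$, $q=e^s$ for $t,s<0$, expand the rising factorial via unsigned Stirling numbers, evaluate the factored sums $\sum_{i\geq 1} i^k e^{it}$, reindex, and extend to $\dR^*$ by analytic continuation. The only cosmetic difference is that the paper invokes the polylogarithm series representation (citing Gradshteyn--Ryzhik/Wood) as a ready-made identity for $\sum_{k\geq 0} k^j e^{tk}=\frac{j!}{t^{j+1}}\sum_{\ell\in\dZ}(1+2i\pi\ell/t)^{-(j+1)}$, whereas you propose to rederive this by differentiating the Mittag--Leffler expansion of $1/(e^u-1)$; these are two names for the same computation, and the paper's citation spares you the sign-tracking you flagged as the main obstacle.
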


\noindent
Let us remark that the notation $S_n^0$ is not useful for the moment but it will be used in the proof of Theorem \ref{T-SLDP}.

\begin{proof}
It follows from Section 7 in \cite{Carlitz1966}, see also Theorem 2 in \cite{Petersen2013} or more recently Section 3 in \cite{Chatterjee2017}, that for all $p,q \in (-1,1)$,
\begin{equation}
\label{EQ-FCTGENE}
 \mathbb{E}\big[p^{D_n}q^{D'_n}\big] = \frac{(1-p)^{n+1}(1-q)^{n+1}}{pq n!}\sum_{k,\ell \geq 0} \binom{k\ell+n-1}{n} p^k q^\ell. 
\end{equation}
Let us start by assuming that $t<0$ and $s<0$. 
By using \eqref{EQ-FCTGENE} with $p = e^{t}<1$ and $q=e^{s}<1$, we obtain that
\begin{eqnarray*}
m_n(t,s) &=& \frac{(1-e^{t})^{n+1}(1-e^{s})^{n+1}}{e^{t}e^{s} n!}\sum_{k,\ell \geq 0} \binom{k\ell+n-1}{n} e^{tk} e^{s\ell},\\ 
&=& \frac{(e^t-1)^{n+1}(e^s-1)^{n+1}}{e^{t}e^{s} (n!)^2}\sum_{k,\ell \geq 0} (k\ell)^{(n)} e^{tk} e^{s\ell}
\end{eqnarray*}
where the rising factorial $(kl)^{(n)}$ was previously defined in \eqref{RISEFACT}.  Hence, we obtain
from \eqref{RISEFACT} that for all $n \geq 1$,
\begin{align*}
   m_n(t,s) &= \frac{(e^t-1)^{n+1}(e^s-1)^{n+1}}{e^{t}e^{s} (n!)^2} \sum_{j=1}^n {n \brack j} \sum_{k \geq 0} k^j e^{t k} \sum_{\ell \geq 0} l^j e^{s \ell}.
\end{align*}
It follows from the series representation of the polylogarithm function given e.g. by formula (13.1)
in \cite{Gradshteyn2015, Wood1992}
that for all $n \geq 1$,
\begin{align*}
   m_n(t,s) =& \frac{(e^t-1)^{n+1}(e^s-1)^{n+1}}{e^{t}e^{s} (n!)^2} \sum_{j=1}^n {n \brack j} \frac{(j!)^2}{(ts)^{j+1}}\sum_{k \in \mathbb{Z}} \frac{1}{\left(1+\frac{2i\pi k}{t}\right)^{j+1}}
   \sum_{\ell \in \mathbb{Z}} \frac{1}{\left(1+\frac{2i\pi \ell}{s}\right)^{j+1}}\\
   =& \frac{(e^t-1)^{n+1}(e^s-1)^{n+1}}{e^{t}e^{s}(st)^{n+1}} \sum_{j=0}^{n-1} {n \brack n-j} \left(\frac{(n-j)!}{n!}\right)^2 (st)^j R_{n-j}(t,s)
\end{align*}
where
$$
R_{n-j}(t,s)=\bigl(1+ r_{n-j}(t)\bigr)\bigl(1+r_{n-j}(s)\bigr),
$$
which is exactly what we wanted to prove.
From now on, we can remark that $m_n$ is clearly an holomorphic function with respect to $t$, resp. $s$, on $\mathbb{C}$. Moreover, the right hand-side of \eqref{EQ-LEM-mn} is also an analytical function with respect to $t$, resp. $s$, at least on $\mathbb{C}\setminus 2i\pi \mathbb{Z}$, which means that \eqref{EQ-LEM-mn} holds true on $(\mathbb{R}^*)^2$.
\end{proof}

\begin{rem}
\label{REM-LAPLACE-UNIDIM}
One can observe that for all $ t \in \mathbb{R}^*$,
$$m_n(t,0)=m_n(0,t)=\left(\frac{e^t-1}{t}\right)^n \left(\frac{1-e^{-t}}{t}\right)\sum_{\ell \in \mathbb{Z}} \left(1+\frac{2i\pi \ell}{t}\right)^{-(n+1)}.$$
This formula can be used to simplify several computations done in \cite{Bercu2024}.
\end{rem}

\noindent
We have seen in the proof of Lemma \ref{LEM-mn} that for all $n \geq 1$, $r_n$  is an analytical function on $\mathbb{C} \setminus (2i\pi \mathbb{Z})$. Then, $S_n$ is defined on $\mathbb{C}^2 \setminus (2i\pi \mathbb{Z})^2$, which implies that $m_n$ is defined on $\mathbb{C}^2$.

\ \vspace{-1ex} \\
The next lemma recalls some useful known properties of unsigned Stirling numbers. 

\begin{lem}
\label{LEM-Stirling}
We have for all $k \leq n$,
\begin{equation*}
{n \brack  n-k} \left(\frac{(n-k)!}{n!}\right)^2 \leq \frac{1}{k !}.
\end{equation*}
Moreover,
\begin{equation*}
    \lim_{n \rightarrow +\infty} {n \brack  n-k} \left(\frac{(n-k)!}{n!}\right)^2 = \frac{1}{2^k k!}.
\end{equation*}
\end{lem}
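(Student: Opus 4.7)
The plan is to identify the Stirling number with an elementary symmetric polynomial and then perform two elementary estimates. Starting from the defining identity \eqref{RISEFACT} and factoring out $x$, one reads off the coefficient of $x^{n-k}$ to obtain
\[
{n \brack n-k}=e_k(1,2,\ldots,n-1),
\]
where $e_k$ denotes the elementary symmetric polynomial of degree $k$. The whole lemma then reduces to bounding and computing the limit of
\[
a_{n,k}\,:=\,\frac{e_k(1,2,\ldots,n-1)}{[n(n-1)\cdots(n-k+1)]^{2}}.
\]

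For the upper bound, I would observe that for every strictly increasing tuple $1\leq i_1<\cdots<i_k\leq n-1$, the product $i_1\cdots i_k$ is maximized when the $i_j$'s are as large as possible, hence bounded by $(n-k)(n-k+1)\cdots (n-1)=(n-1)!/(n-k-1)!$. Since there are exactly $\binom{n-1}{k}$ such tuples, this yields
\[
e_k(1,2,\ldots,n-1)\leq \binom{n-1}{k}\,\frac{(n-1)!}{(n-k-1)!}.
\]
A direct simplification identifies the right-hand side as $\tfrac{1}{k!}\big(\tfrac{n-k}{n}\big)^{2}[n(n-1)\cdots(n-k+1)]^{2}$, which is at most $\tfrac{1}{k!}[n(n-1)\cdots(n-k+1)]^{2}$. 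This proves $a_{n,k}\leq 1/k!$ and hence the first claim.

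For the asymptotics, I would compare $k!\,e_k(1,\ldots,n-1)$ with the unrestricted power sum $\bigl(\sum_{i=1}^{n-1}i\bigr)^{k}=\bigl(\tfrac{n(n-1)}{2}\bigr)^{k}$. Expanding the latter as $\sum_{(i_1,\ldots,i_k)\in\{1,\ldots,n-1\}^{k}} i_1\cdots i_k$, the tuples with pairwise distinct coordinates contribute exactly $k!\,e_k(1,\ldots,n-1)$, while the ``diagonal'' tuples (with at least two coordinates coinciding) are controlled by a union bound over the $\binom{k}{2}$ possible pairs of equal coordinates: their total contribution is at most $\binom{k}{2}\big(\sum_{i=1}^{n-1}i^{2}\big)\big(\sum_{i=1}^{n-1}i\big)^{k-2}=O(n^{2k-1})$. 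Combined with the expansions $\big(\tfrac{n(n-1)}{2}\big)^{k}=\tfrac{n^{2k}}{2^{k}}(1+O(1/n))$ and $[n(n-1)\cdots(n-k+1)]^{2}=n^{2k}(1+O(1/n))$, this gives $a_{n,k}\to 1/(2^{k}k!)$. The only mildly delicate point of the argument is this diagonal estimate for non-distinct tuples; everything else is routine bookkeeping with binomial coefficients and factorials.
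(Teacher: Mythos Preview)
Your proof is correct. The argument for the inequality is essentially identical to the paper's: both identify ${n\brack n-k}$ as the elementary symmetric polynomial $e_k(1,\ldots,n-1)$ (the paper phrases this as ``Definition 26.8.3 in \cite{olver2010}''), bound each of the $\binom{n-1}{k}$ summands by the maximal product $(n-1)!/(n-k-1)!$, and simplify.

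For the limit, the paper simply invokes a handbook asymptotic (formula 26.8.43 in \cite{olver2010}, which amounts to ${n\brack n-k}\sim \tfrac{1}{k!}\binom{n}{2}^{k}$) together with Stirling's approximation, whereas you supply a direct elementary proof of that asymptotic by comparing $k!\,e_k(1,\ldots,n-1)$ with the full power sum $\bigl(\sum_{i=1}^{n-1}i\bigr)^{k}$ and bounding the diagonal contribution by $O(n^{2k-1})$. Your route is thus self-contained where the paper outsources the computation; the underlying idea is the same, but your version avoids the external reference at the modest cost of the union-bound estimate over repeated indices.
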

\begin{proof}
For the first property, we use Definition 26.8.3 in \cite{olver2010} of the unsigned Stirling numbers of first kind. The inequality follows by bounding the factors inside the sum by the maximum element $(n-1)!/(n-k-1)!$ and by noticing that there are $\binom{n-1}{k}$ elements in the sum.  The second property follows from formula 26.8.43 in \cite{olver2010} and the Stirling approximation. 
\end{proof}

\ \vspace{-1ex} \\
The last lemma gives an upper-bound for the functions $r_n$.

\begin{lem}
\label{LEM-bound-r}
For all $n \geq 1$ and for all $t \in \mathbb{R}^+$, there exists a constants $C_{t}$ such that
for any $v \in [-\pi,\pi]$, 
\begin{equation*}
    |r_n(t+i v)| \leq C_{t} \left|\frac{t+iv}{t+i(2\pi-|v|)}\right|^{n+1} \leq C_{t}.
\end{equation*}
\end{lem}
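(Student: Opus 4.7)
The plan is to estimate $r_n(t+iv)$ by writing it as a sum of explicit ratios and then factoring out the dominant term. First I would rewrite
$$
r_n(t+iv) \;=\; \sum_{\ell \in \dZ^*} \left(\frac{t+iv}{t+iv+2i\pi \ell}\right)^{n+1},
$$
and apply the triangle inequality to obtain
$$
|r_n(t+iv)| \;\leq\; \sum_{\ell \in \dZ^*} \left(\frac{t^2+v^2}{t^2+(v+2\pi\ell)^2}\right)^{(n+1)/2}.
$$
The key observation is that for $v\in[-\pi,\pi]$ and $\ell\in\dZ^*$, the quantity $|v+2\pi\ell|$ is minimized at $\ell = -\mathrm{sgn}(v)$ and equals exactly $2\pi-|v|$. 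Factoring out this dominant contribution gives
$$
|r_n(t+iv)| \;\leq\; \left(\frac{t^2+v^2}{t^2+(2\pi-|v|)^2}\right)^{\!(n+1)/2} \sum_{\ell \in \dZ^*} \left(\frac{t^2+(2\pi-|v|)^2}{t^2+(v+2\pi\ell)^2}\right)^{\!(n+1)/2},
$$
which is precisely the shape appearing in the claimed bound.

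Next I would show that the series on the right is bounded by a constant $C_t$ uniformly in $n\geq 1$ and in $v\in[-\pi,\pi]$. By the very choice of $2\pi-|v|$ as the minimizer, each ratio lies in $[0,1]$, and raising a number in $[0,1]$ to the power $(n+1)/2\geq 1$ only decreases it. The two indices $\ell=\pm 1$ therefore contribute at most $2$. For $|\ell|\geq 2$, the elementary estimates $2\pi-|v|\leq 2\pi$ and $|v+2\pi\ell|\geq 2\pi|\ell|-\pi$ yield
$$
\left(\frac{t^2+(2\pi-|v|)^2}{t^2+(v+2\pi\ell)^2}\right)^{(n+1)/2} \;\leq\; \frac{t^2+4\pi^2}{t^2+\pi^2(2|\ell|-1)^2},
$$
and the right-hand side is summable over $|\ell|\geq 2$. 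Putting the two pieces together produces a finite $C_t$ depending only on $t$, which establishes the first inequality of the lemma.

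The second inequality is immediate: since $|v|\leq \pi \leq 2\pi-|v|$, we have $t^2+v^2 \leq t^2+(2\pi-|v|)^2$, hence $\left|(t+iv)/(t+i(2\pi-|v|))\right|\leq 1$ and the upper bound $C_t$ follows. I do not foresee any real obstacle here beyond the careful identification of $2\pi-|v|$ as the correct dominant quantity and the uniformity check for the remainder series; both are essentially routine analytic comparisons.
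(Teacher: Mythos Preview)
Your proof is correct and follows essentially the same approach as the paper: rewrite each summand as $\bigl((t+iv)/(t+i(v+2\pi\ell))\bigr)^{n+1}$, apply the triangle inequality, factor out the dominant term corresponding to the minimizer $|v+2\pi\ell_0|=2\pi-|v|$, and bound the remaining series by a constant depending only on $t$. The only cosmetic differences are that the paper treats the cases $v\in[-\pi,0]$ and $v\in[0,\pi]$ separately (identifying $\ell_0=1$ in the first case and appealing to symmetry) whereas you handle both at once via $\ell_0=-\mathrm{sgn}(v)$, and that you explicitly reduce to the power $1$ to make the uniformity in $n$ transparent; both yield the same conclusion.
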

\begin{proof}
We have for any $v \in [-\pi,\pi]$, 
$$|r_n(t+i v)| \left|\frac{t+i(2\pi-|v|)}{t+iv}\right|^{n+1} \leq \sum_{\ell \in \mathbb{Z}^*} \left|\frac{t^2+(2\pi-|v|)^2}{t^2+(v+2\pi \ell)^2}\right|^{\frac{n+1}{2}}.$$
Hereafter, assuming that $v \in [-\pi,0]$, we have
\begin{align*}
    \sum_{\ell \in \mathbb{Z}^*} \left|\frac{t^2+(2\pi-|v|)^2}{t^2+(v+2\pi \ell)^2}\right|^{\frac{n+1}{2}}\!\!\! &= 1 + \sum_{\ell \geq 2} \left|\frac{t^2+(2\pi-|v|)^2}{t^2+(v+2\pi \ell)^2}\right|^{\frac{n+1}{2}}+ \sum_{\ell \leq -1} \left|\frac{t^2+(2\pi-|v|)^2}{t^2+(v+2\pi \ell)^2}\right|^{\frac{n+1}{2}}\\
    &\leq 1 + \sum_{\ell \geq 2} \left|\frac{t^2+\pi^2}{t^2+4\pi^2 (\ell-1)^2}\right|^{\frac{n+1}{2}}+ \sum_{\ell \leq -1} \left|\frac{t^2+\pi^2}{t^2+4\pi^2 \ell^2}\right|^{\frac{n+1}{2}}\\
    &= 1+ 2 \sum_{\ell \in \mathbb{N}^{*}} \left|\frac{t^2+\pi^2}{t^2+4\pi^2 \ell^2}\right|^{\frac{n+1}{2}} <+\infty,
\end{align*}
since we have assumed that $n+1 \geq 2$. By symmetry the previous upper-bound stays true for $v \in [0,\pi]$, which gives us the announced result.
\end{proof}

\subsection{Proof of Theorem \ref{T-LDP}}

For all $t,s \in \dR$, denote by $L_n$ the normalized cumulant generating function 
\begin{equation*}
    L_n(t,s)  = \frac{1}{n}\log m_n(t,s).
\end{equation*}
It follows from Lemma \ref{LEM-mn} that for all $t,s \in \dR$,
\begin{equation}
\label{LIMLN}
\lim_{n\rightarrow \infty} L_n(t,s) =L(t,s)=\log \left( \Bigl(\frac{\exp(t)-1}{t}\Bigr)\Bigl(\frac{\exp(s)-1}{s}\Bigr)\right)=
L(t)+L(s)
\end{equation}
where 
$$
L(t)=\log \left(\frac{\exp(t)-1}{t}\right).
$$
The function $L$ is finite and differentiable on all the real line.  Then, we deduce from 
the G\"artner-Ellis theorem, see e.g. Theorem 2.3.6 in \cite{Dembo1998}, that
the couple $(D_n/n,D'_n/n)$ satisfies an LDP with good rate function $I$ given by 
\eqref{DEFCI}.
\demend

\subsection{Proof of Theorem \ref{T-SLDP}}
Our goal is to establish, for all $x$ and $y$ in $]1/2,1[$, a sharp asymptotic expansion of the probability
\begin{equation}
\label{PPROBA}
\dP \Big(\frac{D_n}{n-1} \geq x, \frac{D'_n}{n-1} \geq y  \Big) = \sum_{k=\lceil (n-1)x \rceil}^{n-1}
\sum_{k'=\lceil (n-1)y \rceil }^{n-1} \dP(D_n=k, D'_n=k').
\end{equation}
For all $t,v,s,w \in \mathbb{R}$, we have
$$m_n(t+iv,s+iw) = \sum_{k = 0}^{n-1} \sum_{k'=0}^{n-1} e^{(t+iv)k+(s+iw)k'}\mathbb{P}(D_n=k,D'_n=k').$$
Therefore, for all $0\leq k,k'\leq n-1$ and in fact for all $k,k' \in  \dZ$, we obtain that
$$\mathbb{P}(D_n=k,D'_n=k') = e^{-t k -s k'}\frac{1}{(2\pi)^2} \int_{[-\pi, \pi]^2} \!\!\!\! m_n(t+iv,s+i w) e^{-ikv -ik' w}dv dw.$$
As $D_n$ and $D'_n$ are smaller than $n$, we clearly have $|m_n(t+iv,s+iw)|\leq e^{|t|n+|s|n}$.
Hence, it follows from \eqref{PPROBA} together with Fubini's theorem that for all $t,s>0$,
\begin{equation}
\label{PROBAGEQXGEQY}
\dP\Big(\frac{D_n}{n-1} \geq x, \frac{D'_n}{n-1} \geq y \Big) = 
\frac{1}{(2\pi)^2} \int_{[-\pi, \pi]^2} \!\!\!\!\! \!\!m_n(t+iv,s+iw)  
\Sigma_n(t+iv,s+iw) dv dw
\end{equation}
where
\begin{eqnarray*}
\Sigma_n(t+iv,s+iw) &=&\sum_{k = \lceil (n-1)x \rceil }^{+\infty} \sum_{k' = \lceil (n-1)y \rceil }^{+\infty}   e^{-k(t+i v)-k'(s+iw)} \\
&=& \Big(\frac{e^{\! - \lceil (n-1)x \rceil(t+iv)} }{1-e^{-(t+i v)}}\Big)
\Big(\frac{e^{ \! - \lceil (n-1)y \rceil(s+iw)} }{1-e^{-(s+iw)}}\Big).
\end{eqnarray*}
In the following, we choose $t=t_x$ and $s=t_y$. One can observe
that $t_x$ and $t_y$ are both positive since $x,y> 1/2$. Consequently, we deduce from
Lemma \ref{LEM-mn} and \eqref{PROBAGEQXGEQY}, together with the fact that $I(x)=xt_x-L(t_x)$
and $\{x_n\}=nx-\lceil (n-1)x \rceil$, that
\begin{equation}
\label{EQ-PROB-In}
   \dP\Big(\frac{D_n}{n-1} \geq x , \frac{D'_n}{n-1} \geq y \Big) =\Big(\frac{e^{-nI(x)-t_x \{ x_n\}}}{2\pi} \Big)\Big(\frac{e^{-nI(y)-t_y \{  y_n\}}}{2\pi} \Big)I_n
\end{equation}
where 
\begin{equation*}
    I_n = \int_{[-\pi, \pi]^2} \!\! e^{-n \varphi_x(v)-n \varphi_y(w)} f_n^1(v,w) dv dw,
\end{equation*}
with
\begin{equation*}
    \varphi_x(v) = -(L(t_x+iv)-L(t_x)- i x v)
\end{equation*}
and, for $a \in \{0,1\}$,
\begin{equation}
\label{DEFfna}
    f_n^{a}(v,w) = \Big(\frac{e^{i \{ x_n\} v}}{t_x+i v} \Big)\Big(\frac{e^{i \{ y_n\} w}}{t_y+i w} \Big)S_n^{a}(t_x+i v,t_y+i w).
\end{equation}
By using Lemmas \ref{LEM-Stirling} and \ref{LEM-bound-r}, we easily obtain that 
\begin{equation}
    \label{EQ-bound-fn}
    \big|f_n^{a}(v,w)\big| \leq \frac{e^{t_x t_y}}{t_x t_y} (1+a C_{t_x})(1+a C_{t_y}) .
\end{equation}
Hereafter, we split the integral $I_n$ into two parts,  $I_n=J_n+K_n$ where
\begin{eqnarray*}
J_n&=&\int_{[-\pi/2,\pi/2]^2} \!\!e^{-n \varphi_x(v)-n \varphi_y(w)} f_n^1(v,w) dv dw, \\
K_n&=&\int_{[-\pi,\pi]^2 \setminus[-\pi/2,\pi/2]^2}  \!\!e^{-n \varphi_x(v)-n \varphi_y(w)} f_n^1(v,w) dv dw.
\end{eqnarray*}
It follows from the last part of Lemma 5.1 in \cite{Bercu2024} that for all  $v \in \dR$ such that
$|v| \leq \pi$,
\begin{equation}
    \label{INEQ-L}
    -\Re(\varphi_x(v)) =  \Re (L(t_x+iv)-L(t_x)) \leq -\lambda_x\frac{|v|^2}{2}
\end{equation}
where
$$
\lambda_x=\frac{t_x^2 \sigma_x^2}{t_x^2+\pi^2}.
$$
Then, by symmetry, we obtain from \eqref{EQ-bound-fn} and \eqref{INEQ-L} that
\begin{eqnarray}
    \nonumber
    |K_n| &\leq & \frac{e^{t_x t_y}}{t_x t_y} (1+C_{t_x})(1+C_{t_y}) \int_{[-\pi,\pi]^2 \setminus[-\pi/2,\pi/2]^2} e^{-n\Re(\varphi_x(v)+ \varphi_y(w))} dv dw,\\
    &\leq& \frac{4 e^{t_x t_y}}{ t_x t_y} (1+C_{t_x})(1+C_{t_y}) \Big(K_n^1+K_n^2+K_n^3\Big)
    \label{INEQ-Kn}
\end{eqnarray}
where, via standard Gaussian calculations, $K_n^1, K_n^2$ and $K_n^3$ are such that
\begin{align*}
K_n^1 &= \int_{\pi/2}^\pi \!\!\exp\Bigl(-\frac{n \lambda_x v^2}{2}\Bigr) dv
\!\int_{\pi/2}^\pi \!\!\exp\Bigl(-\frac{n \lambda_y w^2}{2}\Bigr) dw \leq \frac{4}{n^2 \pi^2 \lambda_x \lambda_y} \exp\Bigl(-\frac{n \pi^2}{8}\big(\lambda_x+\lambda_y\big)\!\Bigr), \\
K_n^2 &= \int_{0}^{\pi/2} \!\!\exp\Bigl(-\frac{n \lambda_x v^2}{2}\Bigr) dv
\!\int_{\pi/2}^\pi \!\!\exp\Bigl(-\frac{n \lambda_y w^2}{2}\Bigr) dw\leq \frac{\sqrt{2}}{n \lambda_y \sqrt{n\pi \lambda_x}} \exp\Bigl(-\frac{n \pi^2}{8}\lambda_y\!\Bigr), \\
K_n^3&= \int_{\pi/2}^{\pi}\!\! \exp\Bigl(-\frac{n \lambda_x v^2}{2}\Bigr) dv
\!\int_{0}^{\pi/2} \!\!\exp\Bigl(-\frac{n \lambda_y w^2}{2}\Bigr) dw\leq \frac{\sqrt{2}}{n \lambda_x \sqrt{n\pi \lambda_y}} \exp\Bigl(-\frac{n \pi^2}{8}\lambda_x\!\Bigr).
\end{align*}
Consequently, we clearly deduce from \eqref{INEQ-Kn} that $K_n$ goes exponentially fast to zero. It only remains to study the integral $J_n$ through the following three quantities,
\begin{align*}
    J_n =& \int_{[-\pi/2,\pi/2]^2} e^{-n \varphi_x(v)-n \varphi_y(w)} (f_n^1(v,w)-f_n^0(v,w)) dv dw\\
    &+ \int_{[-\pi/2,\pi/2]^2} e^{-n \varphi_x(v)-n \varphi_y(w)} (f_n^0(v,w)-f_n^0(0,0)) dv dw\\
    & + f_n^0(0,0) \int_{[-\pi/2,\pi/2]^2} e^{-n \varphi_x(v)-n \varphi_y(w)} dv dw = J^1_n+J^2_n+J^3_n.
\end{align*}
On the one hand, we have from \eqref{DEFfna} that 
$$
f_n^0(0,0) = \frac{1}{t_x t_y} \sum_{k=0}^{n-1} a_{n,k}
\hspace{1cm}\text{where} \hspace{1cm}
a_{n,k} = {n \brack  n-k} \left(\frac{(n-k)!}{n!}\right)^2(t_x t_y)^k.
$$
We obtain from Lemma \ref{LEM-Stirling} that 
$$ \lim_{n \rightarrow + \infty} a_{n,k} = \frac{t_x^k t_y^k}{2^k k!} 
\hspace{1cm}\text{and} \hspace{1cm}
0 \leq a_{n,k} \leq \frac{t_x^k t_y^k}{k !}.$$
Therefore, we can apply the dominated convergence theorem to get that
\begin{equation*}
    \lim_{n \rightarrow + \infty} f_n^0(0,0) = \frac{e^{\frac{t_x t_y}{2}}}{t_x t_y}.
\end{equation*}
On the other hand, recalling \eqref{INEQ-L} and using a slight extension of the usual Laplace method given by Lemma 5.2 in \cite{Bercu2024}, we obtain that
$$
\int_{[-\pi/2,\pi/2]^2} e^{-n \varphi_x(v)-n \varphi_y(w)} dv dw  = \frac{2\pi}{n \sigma_x \sigma_y} (1+o(1)),
$$
which ensures that
\begin{equation}
    \label{INEQ-Jn3}
    J_n^3 = \frac{2 \pi}{n \sigma_x \sigma_y} \frac{e^{\frac{t_x t_y}{2}}}{t_x t_y}(1+o(1)).
\end{equation}
Furthermore, one can observe that $S^0_n(t_x+iv,t_y+iw)$ is a bounded Lipschitz function on $[-\pi/2,\pi/2]^2$, uniformly in $n$. Indeed, by using Lemma \ref{LEM-Stirling}, we have 
\begin{equation*}
    \Big|S^0_n(t_x+iv,t_y+iw)\Big| \leq \sum_{k=0}^{n-1} \frac{1}{k!}
    \bigl(t_x + |v|\bigr)^k \bigl(t_y + |w|\bigr)^k\leq \exp \Bigl( \Bigl(t_x + \frac{\pi}{2}\Bigr) \Bigl(t_y + \frac{\pi}{2}\Bigr)\Bigr),
\end{equation*}    
and  
\begin{align*}  
    \Big| \frac{\partial S^0_n(t_x+iv,t_y+iw)}{\partial v} \Bigr| &\leq \sum_{k=1}^{n-1} 
    \frac{k(t_x + |v|\bigr)^{k-1} \bigl(t_y + |w|\bigr)^k}{k!}, \\
    & \leq \Bigl(t_y + \frac{\pi}{2}\Bigr)\exp \Bigl( \Bigl(t_x + \frac{\pi}{2}\Bigr) \Bigl(t_y + \frac{\pi}{2}\Bigr)\Bigr).
\end{align*}    
Since $S^0_n(t_x+iv,t_y+iw)=S^0_n(t_y+iw,t_x+iv)$, 
it implies that there exists a positive constant $C$ which does not depend on $n$, such that
$$
|f_n^0(v,w) - f_n^0(0,0)| 
\leq C(|v|+|w|).
$$
Applying this bound, inequality \eqref{INEQ-L} and the usual Laplace method, we obtain 
\begin{equation}
\label{INEQ-Jn2}
    |J_n^2| \leq  C \int_{[-\pi/2,\pi/2]^2} e^{-n \Re (\varphi_x(v)+ \varphi_y(w))}(|v|+|w|) dv dw.
\end{equation}
Hence, as for $K_n$, $J_n^2$ goes exponentially fast to zero. Hereafter, we just have to study the last term $J_n^1$ in order to conclude. By using Lemma \ref{LEM-bound-r}, we have, for all $v \in [-\pi/2,\pi/2]$ and for all $n \geq 1$,
\begin{equation*}
|r_n(t_x+i v)| \leq  C_{t_x} (q_x)^{n+1} \hspace{1cm}\text{where} \hspace{1cm}
 q_x = \left( \frac{t_x^2+(\pi/2)^2}{t_x^2+(3\pi/2)^2} \right)^{1/2}.
\end{equation*}
One can observe that $0<q_x<1$, which means that $r_n(t_x+i v)$ goes exponentially fast to zero.
Then, by using once again Lemma \ref{LEM-Stirling}, we obtain from \eqref{DEFfna} that for all $(v,w) \in [-\pi/2,\pi/2]^2$,
 \begin{align*}
  &   \big|f_n^1(v,w) - f_n^0(v,w)\big|
     \leq   \frac{1}{t_xt_y}\sum_{k=0}^{n-1} \frac{(t_x+|v|)^k (t_y + |w|)^k }{k!} 
     \big| R_{n-k}(t_x+v,t_y+iw) -1 \big|, \\ 
 & \leq  \frac{C_{t_x} (q_x)^{n+1}}{t_xt_y} \exp \Bigl( \frac{1}{q_x}\Bigl(t_x + \frac{\pi}{2}\Bigr) \Bigl(t_y + \frac{\pi}{2}\Bigr)\Bigr) + \frac{C_{t_y} (q_y)^{n+1}}{t_xt_y} \exp \Bigl( \frac{1}{q_y}\Bigl(t_x + \frac{\pi}{2}\Bigr) \Bigl(t_y + \frac{\pi}{2}\Bigr)\Bigr) \\
& \hspace{4cm} + 
  \frac{C_{t_x} C_{t_y}(q_x q_y)^{n+1}}{t_xt_y} \exp \Bigl( \frac{1}{q_xq_y}\Bigl(t_x + \frac{\pi}{2}\Bigr) \Bigl(t_y + \frac{\pi}{2}\Bigr)\Bigr). 
 \end{align*}
Consequently, we deduce from inequality \eqref{INEQ-L} that
$J_n^1$ goes exponentially fast to zero.
Finally, the only contribution for the integral $I_n$ is given by \eqref{INEQ-Jn3} 
and \eqref{EQ-PROB-In} clearly leads to \eqref{SLDPR}. 
From now on, we shall carry out the proof of \eqref{SLDPR2}. Equation \eqref{PROBAGEQXGEQY} becomes, for all $t,s<0$,
\begin{equation*}
   \dP\Big(\frac{D_n}{n-1} \leq 1-x, \frac{D'_n}{n-1} \leq 1-y \Big) 
   =\frac{1}{(2\pi)^2} \int_{[-\pi, \pi]^2} \!\!\!\!\!\!\!\!\!m_n(t+iv,s+iw)  
\Sigma_n(t+iv,s+iw) dv dw
\end{equation*}
where
\begin{eqnarray*}
\Sigma_n(t+iv,s+iw) &=&\sum_{k =-\infty}^{\lfloor (n-1)(1-x) \rfloor } 
\sum_{k' = -\infty}^{\lfloor (n-1)(1-y) \rfloor }   e^{-k(t+i v)-k'(s+iw)} \\
&=& \Big(\frac{e^{\! - (\lfloor (n-1)(1-x) \rfloor +1)(t+iv)} }{e^{-(t+i v)}-1}\Big)
\Big(\frac{e^{ \! - (\lfloor (n-1)(1-y) \rfloor +1)(s+iw)} }{e^{-(s+iw)}-1}\Big).
\end{eqnarray*}
In the following, we choose $t=t_{1-x}$ and $s=t_{1-y}$. In contrast with the previous case, we have $t_{1-x}=-t_x<0$, $t_{1-y}=-t_y<0$ and $I(1-x)=I(x)$, $I(1-y)=I(y)$. Moreover, a simple computation shows that for any $z \in \dR$, 
$$
n(1-z)-\lfloor (n-1)(1-z) \rfloor -1 = -(nz-\lceil(n-1)z \rceil).
$$
Then, via the same arguments as in the proof of \eqref{EQ-PROB-In}, we find that
\begin{align*}
\label{NEQ-PROB-In}
   \dP\Big(\frac{D_n}{n-1} \leq 1-x , \frac{D'_n}{n-1} \leq 1-y \Big)
   & = \Big(\frac{e^{-nI(1-x)-t_{1-x} \{ x_n\}}}{2\pi} \Big)\Big(\frac{e^{-nI(1-y)-t_{1-y} \{  y_n\}}}{2\pi} \Big)I_n, \\
   & =\Big(\frac{e^{-nI(x)+t_x \{ x_n\}}}{2\pi} \Big)\Big(\frac{e^{-nI(y)+t_y \{  y_n\}}}{2\pi} \Big)I_n
\end{align*}
where 
\begin{equation*}
    I_n = \int_{[-\pi, \pi]^2} \!\! e^{-n \varphi_{1-x}(v)-n \varphi_{1-y}(w)} f_n^1(v,w) dv dw,
\end{equation*}
with
\begin{equation*}
    \varphi_{1-x}(v) = -(L(-t_x+iv)-L(-t_x)- i (1-x) v)
\end{equation*}
and, for $a \in \{0,1\}$,
\begin{equation*}
\label{DEFfna}
    f_n^{a}(v,w) = \Big(\frac{e^{-i \{ x_n\} v}}{t_x-i v} \Big)\Big(\frac{e^{-i \{ y_n\} w}}{t_y-i w} \Big)S_n^{a}(-t_x+i v,-t_y+i w).
\end{equation*}
Therefore, we just have to make use of the same calculations as previously done to show \eqref{SLDPR2}. Finally, we establish \eqref{SLDPR3} and \eqref{SLDPR4} using the same lines, 
which completes the proof of Theorem \ref{T-SLDP}.
\demend

\section{Proof of the construction of the probability space}
\label{S-PPS}
Recall the construction made in Section \ref{S-PS}. We call fiber and we denote by $\mathcal{F}_n(\ell)$ the set of cells $\big\{\mathcal{C}_n(k,\ell) \big\slash k\in \{1,2,\dots,n+1\}\big\}$ for $\ell\in\{1,2,\dots,n+1\}$.

\begin{figure}[h!]
	\centering
	\includegraphics[scale=0.6]{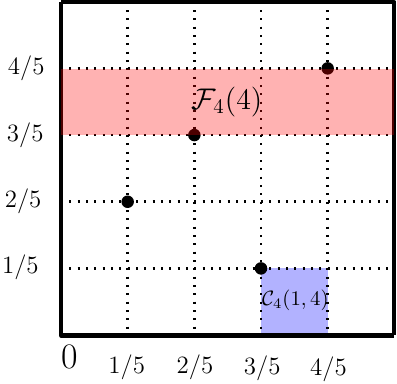}
	\caption{Graphical representation of $\pi_4 = (2314)$ to the left. We colored in light blue the cell $\mathcal{C}_4(1,4)$ and in light red the cells belonging to the fiber $\mathcal{F}_4(4)$.}
	\label{fig:graph_rep}
\end{figure}

\noindent
The following lemma states that the number of cells in each fiber with $\Delta D_{n+1} = +1$ depends only on the number of descent of $\pi_n$ and has a closed formula.

\begin{lem}\label{lem:count}
	For each $\ell\in \{1,2,\dots,n+1\}$, the fiber $\mathcal{F}_n(\ell)$ in the graphical representation has $n-D_n$ cells with increment $+1$ and $D_n+1$ with increment $0$. In other words, $\text{Card} \big\{k \big\slash \Delta D_{n+1}(k,\ell)=1 \big\} = n-D_n$, $\text{Card} \big\{k \big\slash \Delta D_{n+1}(k,\ell)=0 \big\} = D_n+1$.
\end{lem}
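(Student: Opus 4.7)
The construction of Section \ref{S-PS} is equivalent to the standard inductive construction of a permutation: placing $u \in \mathcal{C}_n(k,\ell)$ amounts to inserting the value $\ell$ at position $k$ of $\pi_n$, after relabelling every value $\geq \ell$ by adding $1$. Since this relabelling is order-preserving, the descent status of every pair of consecutive positions in $\pi_{n+1}$ that does not touch position $k$ coincides with that of the corresponding pair in $\pi_n$. Hence only the pair(s) adjacent to the inserted value contribute to $\Delta D_{n+1}(k,\ell)$, which reduces the whole question to a computation involving a handful of indicator functions.

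The plan is then to express $\Delta D_{n+1}(k,\ell)$ explicitly, separating the bulk case $2 \leq k \leq n$ from the boundaries $k=1$ and $k=n+1$. In the bulk, inserting $\ell$ between $\pi_n(k-1)$ and $\pi_n(k)$ replaces one old pair by two new ones, yielding
\[
\Delta D_{n+1}(k,\ell) = \indicatrice_{\{\pi_n(k-1)\geq \ell\}} + \indicatrice_{\{\pi_n(k) < \ell\}} - \indicatrice_{\{\pi_n(k-1) > \pi_n(k)\}}.
\]
At the boundaries, only a single indicator remains: $\indicatrice_{\{\pi_n(1) < \ell\}}$ for $k=1$ and $\indicatrice_{\{\pi_n(n) \geq \ell\}}$ for $k=n+1$.

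Summing over $k \in \{1,\dots,n+1\}$, the last indicator contributes $D_n$ by definition, while the two boundary terms precisely complete the two remaining partial sums into
\[
\sum_{j=1}^{n} \left( \indicatrice_{\{\pi_n(j) \geq \ell\}} + \indicatrice_{\{\pi_n(j) < \ell\}} \right) = n.
\]
This yields $\sum_{k=1}^{n+1} \Delta D_{n+1}(k,\ell) = n - D_n$, independently of $\ell$, which is the announced count of cells of increment $+1$; the count $D_n+1$ for cells of increment $0$ follows at once from $|\mathcal{F}_n(\ell)|=n+1$. The only delicate point is the bookkeeping between bulk and boundary terms so that the telescope closes and no residual $\ell$-dependence survives; I do not expect any deeper obstacle.
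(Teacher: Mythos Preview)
Your argument is correct and is essentially the same as the paper's, only presented in purely algebraic rather than geometric language: the paper introduces a red/blue colouring of cells and indicator functions $R_n(k,\ell)$, $B_n(k,\ell)$, which unwind precisely to your $\indicatrice_{\{\pi_n(k)<\ell\}}$ and $\indicatrice_{\{\pi_n(k-1)\geq\ell\}}$, and the paper's key identity $R_n(k,\ell)+B_n(k+1,\ell)=1$ together with its four-case analysis (L, R, I0, I1) reproduces exactly your bulk/boundary formulas and the final telescoping sum. Your write-up is a bit more streamlined since it bypasses the pictorial layer, but the substance is identical.
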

\begin{proof}
For each point $p=(k/(n+1),\ell/(n+1))$ in $GR(\pi_n)$, we color in red the cells that are adjacent to the left of the line $k/(n+1)\times [0,1]$ and that are bigger than $p$ and we color in blue the cells adjacent to the right of the line $k/(n+1)\times [0,1]$ and that are smaller than the point $p$.
	\begin{figure}[h!]
		\centering
		\includegraphics[scale=0.6]{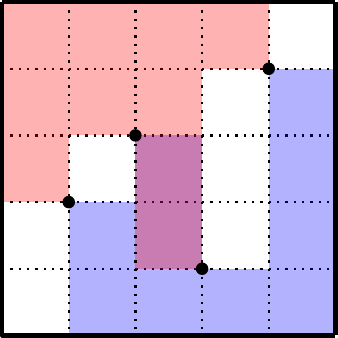}
		\caption{Coloring of the graphical representation associated to $\pi_4 = (2314)$.}
		\label{fig:graph_rep_lines}
	\end{figure}
	
\noindent
Let $R_n(k,\ell) = 1$ if the cell $\mathcal{C}_n(k,\ell)$ is colored red and $0$ otherwise, and
let $B_n(k,\ell)=1$ if the cell $\mathcal{C}_n(k,\ell)$ is colored blue and $0$ otherwise.
With our representation, note that every point induce exactly one cell colored in each fiber, that is
for all $(k, \ell)$ in $\{1,\dots,n+1\}^2$,
\begin{equation}
\label{eq:prop1}
		R_n(k,\ell)+B_n(k+1,\ell) = 1.
\end{equation}
\noindent
This property can be easily checked in  Figure $4$ A).
Let $\ell\in\{1,2,\dots,n+1\}$ be the fiber under study. A deeper analysis shows that there are four cases for each cell.
\begin{itemize}
		\item[L)] The cell belongs to the left boundary, i.e. $\mathcal{C}_n(1,\ell)$ . In this case, it cannot be colored blue and it has an increment $+1$ if the cell is colored red, otherwise the increment is 0, see Figure $4$ B). 
The function $R_n(1,\ell)$ is equal to the indicator function of the cell having increment +1.
		\item[R)] The cell belongs to the right boundary, i.e. $\mathcal{C}_n(n+1,\ell)$. In ths case, 
it cannot be colored red and it has an increment $+1$ if the cell is colored blue, otherwise the increment is 0, see Figure  $4$ B). 
The function $B_n(n+1,\ell)$ is equal to the indicator function of the cell having increment +1.
		\item[I)]The cell belongs to the inner columns, i.e. $\mathcal{C}_n(k,\ell)$ for $k\in \{2,3,\dots,n\}$. In this case, it depends on whether or not there is a descent at $k-1$ for $\pi_n$.
			\item[I0)] If $\pi_n(k-1)<\pi_n(k)$, then the cell adds an increment $+1$ if it is colored only red or only blue, and in this case it cannot be simultaneously colored blue and red, 
			see Figure $4$ B). 
			 The indicator function of the cell having increment +1 can be written as 
			 $R_n(k,\ell)+B_n(k,\ell)$.
			\item[I1)] If $\pi_n(k-1)>\pi_n(k)$, then the cell adds an increment $+1$ if it is colored red and blue, otherwise the increment is 0. Notice that when $D_n(k-1)=1$, all the cells in  $\{\mathcal{C}_n(k,\ell) \big \slash\ell \in \{1,2,\dots,n+1\}\}$ are colored by at least one color,
			see Figure $4$ B). 
			 It can easily be checked that the indicator function of the cell having increment +1 can be written as $R_n(k,\ell)+B_n(k,\ell)-1$.
\begin{figure}[h!]
				\begin{subfigure}{.35\textwidth}
				\centering
				\includegraphics[scale=0.6]{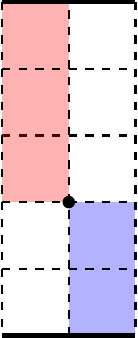}
				\caption{}
				\label{fig:yellow_or_blue}
			\end{subfigure}
			\begin{subfigure}{.55\textwidth}
				\centering
				\includegraphics[scale=0.6]{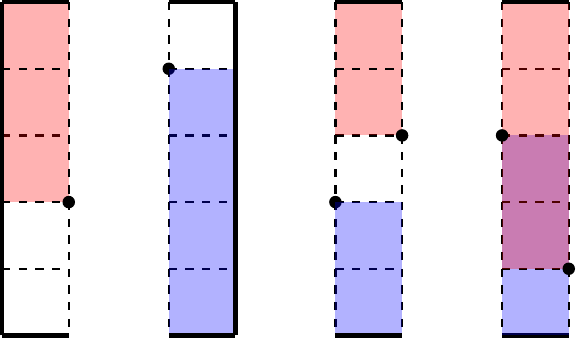}
				\caption{}
			\label{fig:cases}
		\end{subfigure}
			\caption{Figure A): In each fiber, there is exactly one red or blue cell induced by each point of the permutation, as stated by \eqref{eq:prop1}. Figure B): From left to right, we show the cases in R), L), I0) and I1), respectively. }
		\end{figure}
	\end{itemize}
For each $\ell\in \{1,2,\dots,n+1\}$ and for the fiber $\cF_n(\ell)$, we obtain that the sum of cells with increment +1 is given by $\Sigma_n^1+\Sigma_n^2+\Sigma_n^3$ where
$\Sigma_n^1 =R_n(1,\ell) + B_n(n+1,\ell)$,
\begin{eqnarray*}
		\Sigma_n^2 &=&\sum_{\stackrel{k \in\{2,\dots,n\}}{\pi_n(k-1)<\pi_n(k)}} (R_n(k,\ell)+B_n(k,\ell)), \\
		\Sigma_n^3 &=&\sum_{\stackrel{k \in\{2,\dots,n\}}{\pi_n(k-1)>\pi_n(k)}}  (R_n(k,\ell)+B_n(k,\ell)-1).
\end{eqnarray*}
Finally, we deduce from \eqref{DEFDN} and \eqref{eq:prop1} that 
$$
\Sigma_n^1+\Sigma_n^2+\Sigma_n^3 = \sum_{k=1}^n (R_n(k,\ell)+B_n(k+1,\ell))-D_n=n-D_n.
$$
Moreover, as the cells can have either increments +1 or 0, we find that the sum of cells with increment 0 is given by $n+1-n+D_n = D_n+1$.
\end{proof}
\begin{cor}\label{cor:1}
For every permutation $\pi_{n} \in \cS_{n}$, out of the $(n+1)^2$ cells in our graphical representation, there are exactly $(n+1)(n-D_n)$ cells with increment $+1$.
\end{cor}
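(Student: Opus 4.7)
The plan is to observe that this corollary is an immediate consequence of Lemma \ref{lem:count}, obtained by summing the fiber-wise count over all fibers.

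First I would recall that the graphical representation partitions the square $[0,1]^2$ into the $(n+1)^2$ cells $\cC_n(k,\ell)$ with $(k,\ell) \in \{1,\dots,n+1\}^2$, and that these cells are naturally organized into the $n+1$ fibers $\cF_n(\ell) = \{\cC_n(k,\ell) : k \in \{1,\dots,n+1\}\}$ for $\ell \in \{1,\dots,n+1\}$. The fibers are disjoint and their union covers all cells, so the total count of cells with increment $+1$ is the sum over $\ell$ of the number of cells in $\cF_n(\ell)$ with increment $+1$.

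Next I would invoke Lemma \ref{lem:count}, which asserts that for each $\ell \in \{1,\dots,n+1\}$,
\begin{equation*}
\text{Card}\bigl\{k \in \{1,\dots,n+1\} : \Delta D_{n+1}(k,\ell) = 1\bigr\} = n - D_n.
\end{equation*}
Crucially, this count does not depend on $\ell$. Summing over the $n+1$ fibers yields
\begin{equation*}
\sum_{\ell=1}^{n+1} (n - D_n) = (n+1)(n - D_n),
\end{equation*}
which is exactly the claim. There is no real obstacle here: the only thing to check is that the fibers form a partition of the $(n+1)^2$ cells, which is immediate from the definitions, and that Lemma \ref{lem:count} applies uniformly in $\ell$, which it does by its statement.
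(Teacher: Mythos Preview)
Your proposal is correct and matches the paper's approach: the corollary is stated immediately after Lemma \ref{lem:count} without a separate proof, precisely because it follows by summing the fiber-wise count $n-D_n$ over the $n+1$ fibers, exactly as you do.
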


\noindent
We are now in the position to prove Theorem \ref{T-PROBASPACE}.
\begin{proof}[Proof of Theorem \ref{T-PROBASPACE}]
We claim that it is enough to show that 
\begin{equation}
\label{N11}
N_{n+1} (1,1,\pi_n)= (n-D_n)(n-D^\prime_n)+n.
\end{equation} 
As a matter of fact,  for a permutation $\pi_n \in \cS_{n}$, denote $\bar{\pi}_n = n-\pi_n$. The permutation $\bar{\pi}_n$ is a bijective involution, which exchanges the descents (resp. ascents) of $\pi_n$ into ascents (resp. descents) of $\bar{\pi}_n$ and it does the same for $\pi_n^{-1}$ and $\bar{\pi}_n^{-1}$. Since in total the ascents and descents sum up to $n-1$, we have that the descents for $\bar{\pi}_{n}$ are equal to $D_n+1$ and the descents for $\bar{\pi}_n^{-1}$ are equal to $D'_n+1$. Therefore, we have
$$
N_{n+1} (0,0,\pi_n) = N_{n+1} (1,1,\bar{\pi}_n) = (D_n+1)(D'_n+1)+n.
$$
Moreover, for $N_{n+1} (1,0,\pi_n)$, it follows from to Corollary \ref{cor:1} that
$$
N_{n+1} (1,0,\pi_n)+ N_{n+1} (1,1,\pi_n) = (n+1)(n-D_n),
$$
which clearly leads via \eqref{N11} to 
\begin{eqnarray*}
N_{n+1} (1,0,\pi_n) &=& (n+1)(n-D_n) - (n-D_n)(n-D^\prime_n)-n, \\
&=&(n-D_n)(D'_n+1)-n.
\end{eqnarray*}
Consequently, as
$$
N_{n+1} (1,1,\pi_n)+N_{n+1} (1,0,\pi_n)+N_{n+1} (0,1,\pi_n)+N_{n+1} (0,0,\pi_n)=(n+1)^2,
$$
we find that
$$
N_{n+1} (0,1,\pi_n)=(D_n+1)(n-D'_n)-n.
$$
Hereafter, we focus our attention on the proof of \eqref{N11} 
by induction on the size of the permutation.
\ \vspace{1ex}\\
The base case $n=1$ is easy to check since there is only one possible permutation, the identity for 1 element, which is equal to its inverse. Consequently, we have
$N_{2} (1,1,\pi_1) = 2 = (1-D_1)(1-D'_1)+1$ where $D_1=D_1'=0$.
\ \vspace{1ex}\\	
For the induction step, suppose that \eqref{N11} holds true for $n\geq 1$ and 
let $\pi_{n+1}$ be a permutation of size $\cS_{n+1}$. We define $\pi_n$ as the permutation induced by $\pi_{n+1}$ when one takes out $\pi_{n+1}(n+1)$, see Figure \ref{supress} below. 
More precisely, we see $\pi_n$ as the permutation $(\pi_{n+1}(k) \big \slash k \in \{1,\dots,n\})$ with respect to the relative order of the images present in this vector from left to right.
	\begin{figure}[h!]
		\centering
		\includegraphics[scale=0.6]{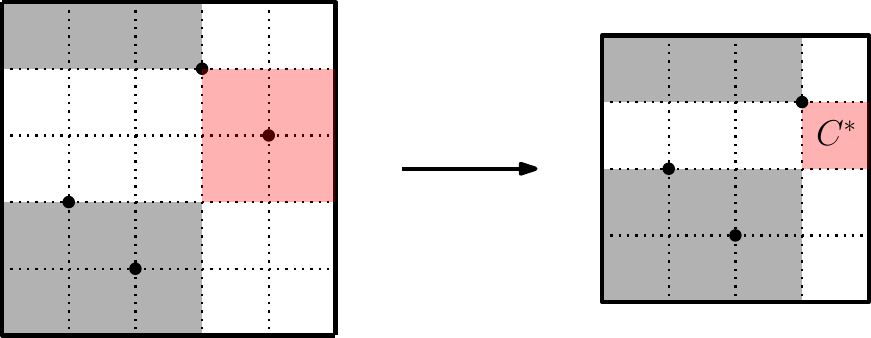}
		\caption{Permutation $\pi_3 = (213)$ (to the right) induced from $\pi_{4} = (2143)$ (to the left) when taking out $\pi_{4}(4) = 3$. We keep track of the increment behavior of the red cell during the induction.}
		\label{supress}
	\end{figure}

\noindent
We denote by $C^*$ the cell in $\pi_n$ where the point $u$ belongs to create the whole permutation $\pi_{n+1}$. It is important to note that if the cell $C^* = \mathcal{C}_n(n+1,\ell)$ where
$\ell \in  \{1,\dots,n+1\}$, then the increment behavior of this cell fixes the relative order of $\pi_n(n)$ and the values of the inverse for $\ell$ and $\ell-1$, that is $\pi_n^{-1}(\ell)$ and $\pi_n^{-1}(\ell-1)$, this are the three points we plot in each case of what follows. It is also important to observe that the increment behavior of the gray cells in Figure \ref{supress} remain invariant by this transformation. There are four possible cases depending on $u \in C^*$.
	\begin{itemize}
		\item[1)] \underline{$\Delta D_{n+1}=1$ and $\Delta D'_{n+1}=1$}. In this case, the relative order of $\pi_n(n)$, $\pi_n^{-1}(\ell)$ and $\pi_n^{-1}(\ell-1)$ is given in Figure \ref{fig:1case}.
			\begin{figure}[h!]
			\centering
			\includegraphics[scale=0.5]{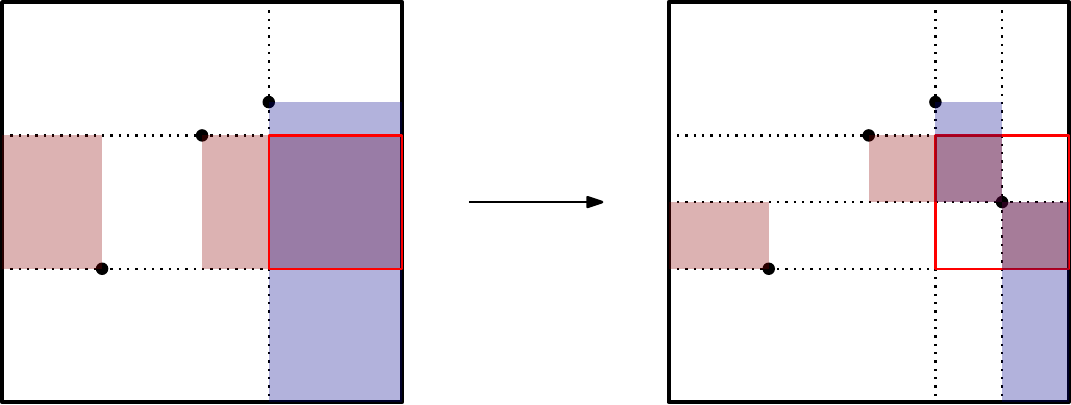}
			\caption{Relative order of $\pi_n(n)$, $\pi_n^{-1}(\ell)$ and $\pi_n^{-1}(\ell-1)$ in case 1). The blue cells are cells with $\Delta D_{n+1}=1$ and the red cells are cells with $\Delta D'_{n+1}=1$.}
			\label{fig:1case}
		\end{figure}

\noindent
In this case, as $D_{n+1}=D_n+1$ and $D'_{n+1}=D'_n+1$, we have from \eqref{N11} that 
		\begin{align*}
			N_{n+2} (1,1,\pi_{n+1})
			&= N_{n+1} (1,1,\pi_n) -1+2, \\
			&=(n-D_n)(n-D'_{n})+n+1\\
			&=(n+1-D_{n+1})(n+1-D'_{n+1})+n+1,
		\end{align*} 
		where the first equality follows since the only cell which is simultaneously red and blue becomes two cells that are simultaneously red and blue.
		\item[2)] \underline{$\Delta D_{n+1}=1$ and $\Delta D'_{n+1}=0$}.
		In this case, the relative order of $\pi_n(n)$, $\pi_n^{-1}(\ell)$ and $\pi_n^{-1}(\ell-1)$ is given in Figure \ref{fig:3case}. 
		\begin{figure}[h!]
			\centering
			\includegraphics[scale=0.5]{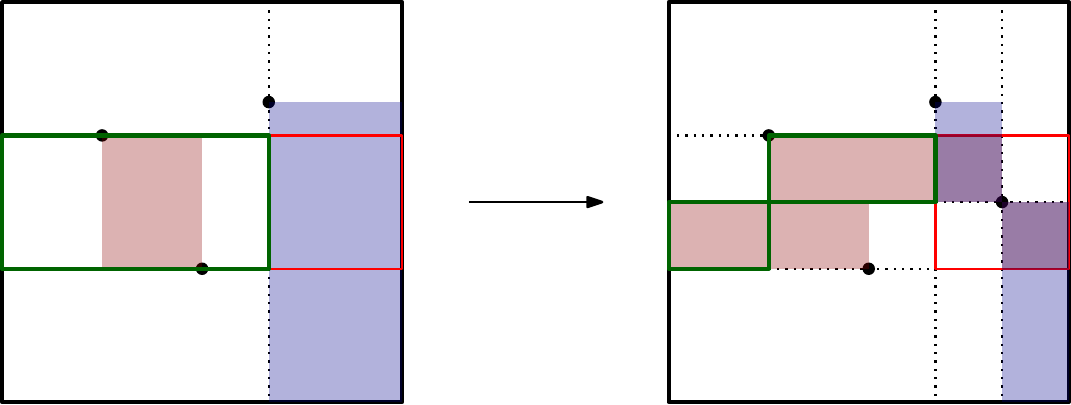}
			\caption{Relative order of $\pi_n(n)$, $\pi_n^{-1}(\ell)$ and $\pi_n^{-1}(\ell-1)$ in case 2). The blue cells are cells with $\Delta D_{n+1}=1$ and the red cells are cells with $\Delta D'_{n+1}=1$.}
			\label{fig:3case}
		\end{figure}

\noindent	
		What remains is to count the number of cells in the delimited green box in the right of Figure \ref{fig:3case} that are also colored in blue. 
We do this bijectively from the left. The quantity of cells in the delimited green box to the right having $\Delta D_{n+2}=1$ are in bijection with the ones having $\Delta D_{n+1} = 1$ in the green box to the left. One has from Lemma \ref{lem:count} applied to $\pi_{n}$ that there are $n-D_n-1$ cells in the delimited green part, the $-1$ coming from the cell $C^*$, which is colored blue, meaning that $\Delta D_{n+1}=1$. Consequently, as $D_n = D_{n+1}+1$ and $D'_n = D'_{n+1}$,
we have from \eqref{N11} that
\begin{align*}
	N_{n+2} (1,1,\pi_{n+1})
	&= N_{n+1} (1,1,\pi_{n}) +n-D_n-1+2\\
	&=(n-D_n)(n-D'_{n})+n+1+(n-D_n)\\
	&=(n-D_{n})(n+1-D'_{n})+n+1\\
	&=(n+1-D_{n+1})(n+1-D'_{n+1})+n+1.
\end{align*}
			
		\item[3)] \underline{$\Delta D_{n+1}=0$ and $\Delta D'_{n+1}=1$}.
		It follows from a similar argument as in case 2) using Lemma \ref{lem:count} applied to 
		$\pi_n^{-1}$.
		
		\item[4)] \underline{$\Delta D_{n+1}=0$ and $\Delta D'_{n+1}=0$}.
	    It follows by joining the argument in cases 2) and 3) using once again Lemma \ref{lem:count}
	    applied to $\pi_n$ and $\pi_n^{-1}$ simultaneously.
	\end{itemize}

\end{proof}

\section{Proof of the martingale results}
\label{S-PMG}
Finally, we shall proceed with the proof of the martingale results concerning 
the couple$(D_n, D^\prime_n)$ such as the functional central limit theorem, 
the quadratic strong law and the law of iterated logarithm.

\begin{proof}[Proof of Proposition \ref{P-FCLT}]
We already saw from \eqref{IPMN} that the predictable quadratic variation
$\langle M \rangle_n$ of the locally square integrable martingale
$(M_n)$ is given by 
\begin{equation*}
\langle M \rangle_n  =\sum_{k=1}^{n-1} \begin{pmatrix}
(k-D_k)(D_k+1) & k\\
k & (k-D_k^\prime)(D_k^\prime+1)
\end{pmatrix}.
\end{equation*}
It follows from the almost sure convergences
\begin{equation}
\label{ASCVGDNDPN}
\lim_{n \rightarrow \infty}\frac{D_n}{n} = \frac{1}{2} \hspace{1cm} \text{and} \hspace{1cm}
\lim_{n \rightarrow \infty}\frac{D_n^\prime}{n} = \frac{1}{2}
\end{equation}
together with the classical Toeplitz lemma that
\begin{equation}
\label{ASCVGPQVN}
\lim_{n \rightarrow \infty}\frac{1}{n^3} \langle M \rangle_n= \frac{1}{12} 
\begin{pmatrix}
1 & 0\\
0 & 1
\end{pmatrix}
\hspace{1cm} \text{a.s.}
\end{equation} 
Consequently, we immediately obtain from \eqref{ASCVGPQVN} that for all $t \geq 0$,
\begin{equation}
\label{ASCVGPQVNT}
\lim_{n \rightarrow \infty}\frac{1}{n^3} \langle M \rangle_{\lfloor nt \rfloor}= \frac{t^3}{12} 
\begin{pmatrix}
1 & 0\\
0 & 1
\end{pmatrix}
\hspace{1cm} \text{a.s.}
\end{equation} 
It is not hard to see that $(M_n)$ satisfies Lindeberg's condition since for all $n \geq 1$,
the increments $||\Delta M_n|| \leq 2n$.
Therefore, we deduce from the functional central limit theorem for martingales given e.g. in Theorem 2.5 of \cite{Durrett1978} that
\begin{equation}
\label{FCLTMART}
\Big(\frac{1}{\sqrt{n^{3}}} M_{\lfloor nt \rfloor}, t \geq 0\Big) \Longrightarrow \big( B_t, t \geq 0 \big)
\end{equation}
where $\big( B_t, t \geq 0 \big)$ is a continuous two-dimensional centered Gaussian process starting from the origin with covariance matrix given, for all $0<s \leq t$, by
$$
\dE[B_s B_t^T]= \frac{s^{3}}{12}
\begin{pmatrix}
1 & 0\\
0 & 1
\end{pmatrix}.
$$
Finally, \eqref{DEFMN} and \eqref{FCLTMART} lead to
\eqref{FCLTDN} where $W_t=B_t/t^2$,
which completes the proof of Proposition \ref{P-FCLT}.
\end{proof}

\begin{proof}[Proof of Proposition \ref{P-QSLLIL}]
We shall make use of the quadratic strong law for scalar martingales given e.g. by Theorem 3 in \cite{Bercu2004}. For any vector $u$ of $\dR^2$, denote $M_n(u)=\langle u, M_n \rangle$.
It follows from \eqref{DEFMN} that
\begin{equation}
\label{DEFMNU}
M_n(u)= n \left(\langle u,V_n\rangle 
- \frac{(n-1)}{2} \langle u,v\rangle  \right).
\end{equation}
We obtain from \eqref{DECVN} and \eqref{DEFMNU} that $M_n(u)$ can be rewritten in the additive form
\begin{equation}
\label{MNUS}
M_n(u)= \sum_{k=1}^{n} k \,\varepsilon_{k}(u)
\end{equation}
where $\varepsilon_{n+1}(u)=\langle u, \xi_{n+1}-\pi_n \rangle$ and $\pi_n$ is the vector of 
$\dR^2$ given by $\pi_n=(p_n,p^\prime_n)^T$. One can observe that $(M_n(u))$ is a
locally square integrable scalar martingale. Since for any vector $u$ of $\dR^2$,
\begin{eqnarray*}
\dE\bigl[\bigl(M_{n+1}(u)-M_{n}(u)\bigr)^2 | \cF_n\bigr] &=&
u^T \dE\bigl[(M_{n+1}-M_{n})(M_{n+1}-M_{n})^T | \cF_n\bigr]u, \\
&=&u^T ( \langle M\rangle_{n+1} - \langle M\rangle_{n}) u,
\end{eqnarray*}
we deduce from \eqref{IPMN} and \eqref{MNUS} that
\begin{equation*}
\label{CDM2ESPNU}
\dE\bigl[\varepsilon^2_{n+1}(u) | \cF_n\bigr] 
=\frac{1}{(n+1)^2}
u^T
\begin{pmatrix}
(n-D_n)(D_n+1) & n\\
n & (n-D_n^\prime)(D_n^\prime+1)
\end{pmatrix}
u.
\end{equation*}
Consequently, we obtain from \eqref{ASCVGDNDPN} that
\begin{equation}
\label{CDM2ESPNU}
\lim_{n \rightarrow \infty}
\dE\bigl[\varepsilon^2_{n+1}(u) | \cF_n\bigr] 
=\frac{1}{4}||u||^2 \hspace{1cm} \text{a.s.}
\end{equation}
Moreover, we already saw that for all $n \geq 1$, the increments $||\Delta M_n|| \leq 2n$ which implies that $|\varepsilon_{n}(u) | \leq 2 ||u||$, leading to
$$
\sup_{n \geq 0} \dE\bigl[\varepsilon^4_{n+1}(u) | \cF_n\bigr] < \infty \hspace{1cm} \text{a.s.}
$$
Therefore, it follows from the quadratic strong law for scalar martingales that
\begin{equation}
\label{PQSL1}
\lim_{n\rightarrow \infty} 
\frac{1}{\log s_n} \sum_{k=1}^n f_k\Big( \frac{M_k^2(u)}{s_k} \Big)  =\frac{1}{4}||u||^2
\hspace{1cm} \text{a.s.}
\end{equation}
where
$$
f_n=\frac{n^2}{s_n}
\hspace{1cm} \text{and}\hspace{1cm} s_n=\sum_{k=1}^n k^2.
$$
We clearly have that $n f_n$ converges to $3$
which immediately implies that $f_n$ goes to zero. Consequently, the quadratic strong law \eqref{PQSL1} reduces to
\begin{equation}
\label{PQSL2}
\lim_{n\rightarrow \infty} 
\frac{1}{\log n} \sum_{k=1}^n  \frac{M_k^2(u)}{k^4}   =\frac{1}{12}||u||^2
\hspace{1cm} \text{a.s.}
\end{equation}
Therefore, we deduce from \eqref{DEFMNU} and \eqref{PQSL2} that for any vector $u$ of $\dR^2$,
\begin{equation}
\label{PQSL3}
\lim_{n\rightarrow \infty} 
\frac{1}{\log n} \sum_{k=1}^n  \frac{1}{k^2}
u^T   \begin{pmatrix}
\frac{D_k}{k} - \frac{1}{2} \\
\frac{D_k^\prime}{k} - \frac{1}{2}
\end{pmatrix}
\begin{pmatrix}
\frac{D_k}{k} - \frac{1}{2} \\
\frac{D_k^\prime}{k} - \frac{1}{2}
\end{pmatrix}^{\!T}
\!\!u
=\frac{1}{12}||u||^2
\hspace{1cm} \text{a.s.}
\end{equation}
By virtue of the second part of Proposition 4.2.8 in \cite{Duflo1997}, we obtain from \eqref{PQSL3} that
\begin{equation}
\label{PQSL4}
\lim_{n\rightarrow \infty} 
\frac{1}{\log n} \sum_{k=1}^n  \frac{1}{k^2}
   \begin{pmatrix}
\frac{D_k}{k} - \frac{1}{2} \\
\frac{D_k^\prime}{k} - \frac{1}{2}
\end{pmatrix}
\begin{pmatrix}
\frac{D_k}{k} - \frac{1}{2} \\
\frac{D_k^\prime}{k} - \frac{1}{2}
\end{pmatrix}^{\!T}
=\frac{1}{12}\begin{pmatrix}
1 & 0\\
0 & 1
\end{pmatrix}
\hspace{1cm} \text{a.s.}
\end{equation}
Furthermore, we obviously obtain \eqref{QSLTRACEDN} 
by taking the trace on both sides of \eqref{PQSL4}.
It only remains to prove the law of iterated logarithm \eqref{LILTRACEDN}. We clearly have
$$
\sum_{n=1}^\infty f_n^2 < \infty.
$$
Hence, it follows from the law of iterated logarithm for martingales
due to Stout \cite{Stout1973}, see also Corollary 6.4.25 in \cite{Duflo1997}, that for any vector $u$ of $\dR^2$,
\begin{eqnarray}
 \limsup_{n \rightarrow \infty} \Bigl(\frac{1}{2 n^3 \log \log n}\Bigr)^{1/2} M_n(u)  & = & 
 -\liminf_{n \rightarrow \infty} \Bigl(\frac{1}{2 n^3 \log \log n}\Bigr)^{1/2} M_n(u) \nonumber \\
 & = & \frac{1}{\sqrt{12}}||u|| \hspace{1cm} \text{a.s.}
 \label{PLILMN1}
\end{eqnarray}
Therefore, we obtain from \eqref{PLILMN1} that for any vector $u$ of $\dR^2$,
\begin{equation*}
 \limsup_{n \rightarrow \infty} \frac{1}{2 n^3 \log \log n} \langle u, M_n \rangle^2  = \frac{1}{12}||u||^2 \hspace{1cm} \text{a.s.}
\end{equation*}
which leads to 
\begin{equation}
 \limsup_{n \rightarrow \infty} \frac{1}{2 n^3 \log \log n} ||M_n||^2  = \frac{1}{6} \hspace{1cm} \text{a.s.}
 \label{PLILMN2}
\end{equation}
Finally, we deduce \eqref{LILTRACEDN} from \eqref{PLILMN2}, which achieves the proof of Propostion \ref{P-QSLLIL}.
\end{proof}

\bibliographystyle{abbrv}

\end{document}